\theoremstyle{plain}
\newtheorem{thm}{Theorem}
\newtheorem{lem}[thm]{Lemma}
\newtheorem{prop}[thm]{Proposition}
\newtheorem{cor}[thm]{Corollary}
\theoremstyle{definition}
\newtheorem{defn}[thm]{Definition}
\newtheorem{exmp}[thm]{Example}
\newtheorem{rem}[thm]{Remark}
\DeclareMathOperator{\id}{id}
\newcommand{\N}{\mathbb{N}}
\newcommand{\Z}{\mathbb{Z}}
\newcommand{\R}{\mathbb{R}}
\begin{document}

\title[Non-unital Ore extensions]{Non-unital Ore extensions}


\author{Patrik Lundstr\"{o}m}
\address{Department of Engineering Science,
University West,
SE-46186 Trollhättan, Sweden}

\author{Johan \"{O}inert}
\address{Department of Mathematics and Natural Sciences,
Blekinge Institute of Technology,
SE-37179 Karlskrona, Sweden}

\author{Johan Richter}
\address{Department of Mathematics and Natural Sciences,
Blekinge Institute of Technology,
SE-37179 Karlskrona, Sweden}

\email{patrik.lundstrom@hv.se; johan.oinert@bth.se; johan.richter@bth.se}

\subjclass[2010]{16S32, 16S36, 16S99, 16W70, 16U70}
\keywords{non-unital ring, Ore extension, simple ring, outer derivation}

\begin{abstract}
In this article, we study Ore extensions of non-unital associative rings.
We provide a characterization of simple non-unital differential polynomial rings $R[x;\delta]$,
under the hypothesis that $R$ is $s$-unital and $\ker(\delta)$ contains a nonzero idempotent.
This result generalizes a result by \"Oinert, Richter and Silvestrov from the unital setting.
We also present a family of examples of simple non-unital differential polynomial rings.
\end{abstract}

\maketitle

\pagestyle{headings}


\section{Introduction}
In this article all rings will be associative, but not necessarily unital.
Ore extensions are unital rings that were introduced by Ore \cite{ore1933} under the name of \emph{non-commutative polynomial rings}.
Our aim here is to study a non-unital generalization of Ore extensions.

Let $R$ be a unital ring,
let $\sigma : R \to R$ be a unital ring endomorphism (not necessarily injective), and let $\delta : R \to R$ be a $\sigma$-derivation, i.e.
$\delta$ is an additive map satisfying
\begin{displaymath}
	\delta(rs)=\sigma(r)\delta(s) + \delta(r)s
\end{displaymath}
for all $r,s \in R$.
The \emph{Ore extension} $R[x;\sigma,\delta]$
is defined as the ring generated by $R$ and an element $x\notin R$
such that $1,x,x^2,\ldots$ form a basis for $R[x;\sigma,\delta]$ as a left $R$-module and all $r\in R$ satisfy
\begin{equation}
xr = \sigma(r)x + \delta(r).
\label{eq:DefRelation}
\end{equation}
Such a ring always exists and is unique up to isomorphism (see \cite{GoodearlWarfield}).
Since $\sigma(1_R) = 1_R$ and $\delta(1_R) = \delta(1_R \cdot 1_R) = \sigma(1_R) \cdot \delta(1_R)+ \delta(1_R) \cdot 1_R$, one gets that $\delta(1_R) = 0$ and hence $1_R$ will
be a multiplicative identity element for $R[x; \sigma, \delta]$ as well.

The Ore extensions
play an important role when
investigating cyclic algebras, enveloping rings of solvable Lie algebras,
and various types of graded rings 
such as group rings and crossed products,
see e.g. \cite{cohn1977}, \cite{jacobson1999},
\cite{mcconell1988} and \cite{rowen1988}.
They are also a natural source of
examples and counter-examples in ring theory,
see e.g. \cite{bergman1964} and \cite{cohn1961}.

Ore extensions $R[x; \id_R, \delta]$ with the endomorphism being equal to the identity map are called \emph{differential polynomial rings} and are denoted by $R[x;\delta]$. In that case, $\delta$ is a derivation.

\"Oinert, Richter and Silvestrov \cite{ORS2013} investigated when Ore extensions are simple. In particular, they obtained necessary and sufficient conditions for differential polynomials rings to be simple. For the purposes of this article, the following is the most important of their results.

\begin{thm}[{\cite[Theorem 4.15]{ORS2013}}]\label{thm:unitalsimplicy}
If $R$ is a unital ring with a derivation $\delta$, then $S:=R[x;\delta]$ is simple if and only if $R$ is $\delta$-simple and $Z(S)$ is a field. 
\end{thm}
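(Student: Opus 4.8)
The plan is to prove the two implications separately, the bulk of the work being in the "if" direction.

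For the "only if" direction, assume $S = R[x;\delta]$ is simple. To see that $R$ is $\delta$-simple, I would take a $\delta$-invariant ideal $I \trianglelefteq R$ and check that $\bigoplus_{n \ge 0} I x^n$ is a two-sided ideal of $S$: closure under left and right multiplication by $R$ is immediate (for the right side one uses $x^n r = \sum_k \binom{n}{k}\delta^k(r)x^{n-k}$ and that $I$ is a right ideal), while closure under left multiplication by $x$ uses precisely the $\delta$-invariance of $I$, via $x(ax^n) = ax^{n+1} + \delta(a)x^n$. Simplicity of $S$ then forces this ideal to be $0$ or $S$, hence $I = 0$ or $I = R$. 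To see that $Z(S)$ is a field I would invoke the standard fact that the center of a simple unital ring is a field: for $0 \neq z \in Z(S)$, the ideal $Sz = zS$ is nonzero hence equals $S$, so $z$ has a two-sided inverse $w$, and a one-line computation (using $sz = zs$) shows $ws = sw$ for all $s$, so $w \in Z(S)$.

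For the "if" direction, assume $R$ is $\delta$-simple and $Z(S)$ is a field, and let $I$ be a nonzero ideal of $S$; the goal is $I = S$. Let $n$ be the least degree of a nonzero element of $I$, and let $L \subseteq R$ be the set of coefficients of $x^n$ occurring in elements of $I$ of degree $\le n$. I would first verify that $L$ is a two-sided ideal of $R$ (here $\sigma = \id$ makes $fr$ have the same leading coefficient pattern), and, crucially, that $L$ is $\delta$-invariant: for $f = \sum_i a_i x^i$ one has $xf - fx = \sum_i \delta(a_i) x^i \in I$, of degree $\le n$ with $x^n$-coefficient $\delta(a_n)$. Since $I \neq 0$ gives $L \neq 0$, $\delta$-simplicity yields $L = R$, and as $R$ is unital there exists $f \in I$ that is monic of degree $n$. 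Now minimality of $n$ is used a second time: for every $r \in R$, $fr - rf \in I$ has degree $< n$, hence vanishes; and $xf - fx = \delta(f) \in I$ has degree $< n$ (because $\delta(1) = 0$), hence vanishes. Thus $f$ commutes with $R$ and with $x$, so $f \in Z(S)$. Since $Z(S)$ is a field, $f$ is invertible in $S$, whence $1 = f^{-1}f \in I$ and $I = S$.

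The main obstacle I anticipate is locating the correct invariant object in the "if" direction: one must realize that it is the set of leading coefficients \emph{at the minimal degree} (not at an arbitrary degree) that forms a $\delta$-invariant ideal of $R$, so that $\delta$-simplicity can be applied to produce a monic element, and then that minimality must be exploited once more to force that monic element into $Z(S)$, where the field hypothesis finally delivers an invertible element of $I$. The remaining ingredients — the commutator identity $xf - fx = \delta(f)$ and the expansion of $x^n r$ — are routine computations in $R[x;\delta]$.
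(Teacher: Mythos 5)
Your proof is correct, and it is essentially the argument the paper uses: the paper only cites this statement from \cite{ORS2013}, but its own proof of the generalization (Theorem~\ref{thm:simplicitynonunitality}, implication (v)$\Rightarrow$(i), via Lemma~\ref{lemma:Hn}) is exactly your "if"-direction with the leading-coefficient ideal $H_n(I)$ at minimal degree, the monic element replaced by one with leading coefficient an idempotent $e$, and the field $Z(S)$ replaced by $Z(eSe)$; likewise your "only if" direction matches Proposition~\ref{sigmadeltasimple} together with the standard fact that the center of a simple unital ring is a field. Your direct conclusion that the monic central element is invertible (rather than deriving a contradiction with $n>0$) is a minor cosmetic difference only.
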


B\"ack, Richter and Silvestrov \cite{BRS2018} discussed Ore extensions of non-unital rings in the hom-associative context, including Ore extensions of non-associative rings. In \cite{BRS2018},  they proved well-definedness of the construction using direct computations, without passing to unitalizations as we do in the present article (cf. Section~\ref{sec:unitalization}). 

Nystedt, \"Oinert and Richter \cite{NOR2018,NORmonoid} generalized the construction of Ore extensions to obtain non-associative Ore extensions, and the even broader class of \emph{Ore monoid rings}, and generalized the conditions on simplicity from \cite{ORS2013}.

Non-unital rings arise naturally in e.g. functional analysis. One example coming from distribution theory, is the ring $C^{\infty}_c(\mathbb{R})$ of smooth functions with compact support. 
The rings $C_0(X)$ of continuous functions that vanish at infinity, where $X$ is a locally compact Hausdorff space, 
are further examples of great importance.
A well-known theorem in the theory of operator algebras asserts that every commutative $C^{*}$-algebra is isomorphic to some $C_0(X)$. Unless $X$ is compact, $C_0(X)$ will in fact be a non-unital ring.

Many important non-unital rings satisfy weaker analogues of unitality.
Indeed, one can show (see e.g. \cite{Ny2019}) that the following chain of inclusions hold for
different classes of rings:
\[
  \{ \mbox{unital rings} \} \subsetneq 
  \{ \mbox{rings with enough idempotents} \} \subsetneq
  \{ \mbox{locally unital rings} \} 
\]
\[
  \subsetneq
  \{ \mbox{$s$-unital rings} \} \subsetneq 
  \{ \mbox{idempotent rings} \}. 
\]
Recall that a ring $R$ is called {\it idem\-potent} if $R R = R$.
Following Fuller \cite{Fu}, we say that $R$ has {\it enough idempotents} if there exists a set 
$\{ e_i \}_{i \in I}$ of orthogonal idempotents in $R$ (called a complete set of idempotents for $R$) 
such that $R = \bigoplus_{i\in I} R e_i = \bigoplus_{i \in I} e_i R.$
Following \'{A}nh and M\'{a}rki \cite{anhmarki1987}, we say that $R$ is {\it locally unital} if for all $n \in \N$ and all
$r_1,\ldots , r_n \in R$ there is an idempotent $e \in R$ such that for all $i \in \{1, \ldots , n\}$
the equalities $e r_i = r_i e = r_i$ hold.
The ring $R$ is called {\it $s$-unital} if for all $r \in R$ the relation $r \in Rr \cap rR$ holds.

A concrete example of a non-unital ring with enough idempotents is $M_\infty(U)$, 
the ring of infinite matrices over a unital ring $U$, where each matrix only has 
a finite number of nonzero entries. This ring is clearly locally unital.
Locally unital rings appear frequently in mathematics. 
The class of locally unital rings include the von Neumann regular rings \cite{anhmarki1987}, and Leavitt path algebras \cite{AbramsAraMolineBook}.
Furthermore, rings of functions with compact support (cf.~ Example~\ref{ex:outercomm}) are locally unital.  
On the other hand, it is easy to see that if we consider rings of \emph{continuous} functions with compact support,
then such a ring is always $s$-unital but locally unital only when the underlying space is compact (in which case the ring
is in fact unital).

In this article, we are going to generalize the above construction of Ore extensions to the non-unital setting.
Indeed, we are going to allow $R\neq \{0\}$ to be an arbitrary (not necessarily unital) associative ring. 
To be precise, let $R[x;\sigma,\delta]$ denote the set $\{ r_0 + \sum_{i=1}^n r_i x^i \mid n\geq 1,\quad r_0,\ldots,r_n \in R\}$.
It is clear that $R[x;\sigma,\delta]$ has a natural left $R$-module structure. 
Note that symbols of the form $x^i$ are usually not elements of $R[x;\sigma,\delta]$. Instead, we will mainly view them as placeholders. To ease notation we will, however, sometimes write $r x^0$ instead of an actual element $r \in  R[x;\sigma,\delta]  \cap R$.

We will define a ring multiplication on $R[x;\sigma,\delta]$ in the following way.
For $n,m>0$ and all $a,b\in R$, we put
\begin{eqnarray}
    (a x^n) (b x^m) &:=& \sum_{i=0}^n a\pi_i^n(b) x^{i+m}\\
    a(bx^m) &:=& abx^m, \text{ and}\\
    (ax^n)b &:=& \sum_{i=0}^n a \pi_i^n(b) x^i. 
\end{eqnarray}
Here $\pi_i^n$ denotes the sum of all $\binom{n}{i}$ possible compositions of $i$ copies of $\sigma$ and $n-i$ copies of $\delta$ in arbitrary order. For instance, $\pi_1^2=\sigma\circ\delta+\delta\circ\sigma$, whereas $\pi_0^0=\mathrm{id}_R$. In the special case of differential polynomial rings, we get

\begin{eqnarray}
	(a x^n) (b x^m) &=& \sum_{i=0}^n \binom{n}{i} a \delta^{n-i}(b) x^{i+m}, \label{eqn:prod1}\\
	a (b x^m) &=& ab x^m, \text{ and} \label{eqn:prod2}\\
	(a x^n) b &=& \sum_{i=0}^n \binom{n}{i} a \delta^{n-i}(b) x^i. \label{eqn:prod3}
\end{eqnarray}

Similarly to the classical (unital) case,
if $\sigma = \id_R$, then we will refer to $R[x;\id_R,\delta]$ as a \emph{non-unital differential polynomial ring} and simply denote it by $R[x;\delta]$.
If $\delta \equiv 0$, then $R[x;\sigma,0]$ is called a \emph{non-unital skew polynomial ring}.

This article is organized as follows.
In Section~\ref{sec:unitalization}, we will show that if $R$ is a non-unital ring, $\sigma$ is an endomorphism on $R$, and $\delta$ is a $\sigma$-derivation, then $\sigma$ and $\delta$ can be extended to an endomorphism respectively a $\sigma$-derivation on $R'$, the unitalization of $R$ (see Theorem~\ref{thm:unitalization}). This result is used to establish that the non-unital Ore extension is a well-defined associative ring (see Corollary~\ref{cor:associative}).
In Section~\ref{sec:simplicity}, we study conditions under which a non-unital Ore extension is simple, especially in the case of differential polynomial rings. 
In particular, we generalize Theorem~\ref{thm:unitalsimplicy} to $s$-unital differential polynomial rings 
(see Theorem~\ref{thm:simplicitynonunitality}).
In Section~\ref{sec:ex}, several examples are presented.

\section{The Ore extension of a unitalized ring}\label{sec:unitalization}

In this section we will establish that non-unital Ore extensions are associative (see Corollary~\ref{cor:associative}).
Recall that the \emph{unitalization} $R'$ of a ring $R$ is defined as the set $R':=R \times \Z$.
The addition on $R'$ is defined component-wise and the multiplication on $R'$ is defined by the rule
$(r,n)(s,m)=(rs+ns+mr,nm)$, for $(r,n),(s,m) \in R'$.
Note that $(0,1_R)$ is the multiplicative identity element of $R'$.

\begin{thm}\label{thm:unitalization}
Let $R$ be a non-unital ring and let $R'$ be its unitalization. If $\sigma : R \to R$ is a ring endomorphism and $\delta : R \to R$ is a $\sigma$-derivation, then they both have extensions $\tilde{\sigma}$ and $\tilde{\delta}$ to $R'$, such that $\tilde{\sigma}$ is a ring endomorphism of $R'$, respecting the multiplicative identity element, and $\tilde{\delta}$ is a $\tilde{\sigma}$-derivation. 
\end{thm}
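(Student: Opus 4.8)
The plan is to define the extensions by the natural formulas forced by requiring $\tilde\sigma$ to be unital and $\tilde\delta$ to be a $\tilde\sigma$-derivation, and then to verify the required identities by direct computation. Recall that every element of $R'$ is uniquely $(r,n)$ with $r\in R$ and $n\in\Z$, and we may think of this as $r + n\cdot 1_{R'}$. Since $\tilde\sigma$ should fix $1_{R'}$ and restrict to $\sigma$ on $R$, the only possibility is
\[
\tilde\sigma(r,n) := (\sigma(r),n).
\]
Likewise, since a $\tilde\sigma$-derivation must kill the identity (the same one-line argument as in the unital case: $\tilde\delta(1_{R'}) = \tilde\delta(1_{R'}1_{R'}) = \tilde\sigma(1_{R'})\tilde\delta(1_{R'}) + \tilde\delta(1_{R'})1_{R'} = 2\tilde\delta(1_{R'})$), we are forced to set
\[
\tilde\delta(r,n) := (\delta(r),0).
\]

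First I would check that $\tilde\sigma$ is a ring endomorphism of $R'$ fixing $(0,1_R)$. Additivity is immediate from the component-wise addition, $\tilde\sigma(0,1_R) = (\sigma(0),1) = (0,1)$ uses $\sigma(0)=0$, and multiplicativity amounts to the identity
\[
\sigma(rs + ns + mr) = \sigma(r)\sigma(s) + n\sigma(s) + m\sigma(r),
\]
which follows from $\sigma$ being additive and multiplicative on $R$. Next I would check that $\tilde\delta$ is additive (again immediate) and satisfies the $\tilde\sigma$-derivation rule $\tilde\delta(\alpha\beta) = \tilde\sigma(\alpha)\tilde\delta(\beta) + \tilde\delta(\alpha)\beta$ for $\alpha = (r,n)$, $\beta = (s,m)$. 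The left-hand side is $(\delta(rs + ns + mr), 0) = (\delta(rs) + n\delta(s) + m\delta(r), 0)$, while the right-hand side is $(\sigma(r),n)(\delta(s),0) + (\delta(r),0)(s,m) = (\sigma(r)\delta(s) + n\delta(s), 0) + (\delta(r)s + m\delta(r), 0)$. Comparing, the claim reduces exactly to the $\sigma$-derivation identity $\delta(rs) = \sigma(r)\delta(s) + \delta(r)s$ on $R$.

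There is no real obstacle here: the statement is essentially a bookkeeping verification, and the only subtlety worth flagging is that the formulas for $\tilde\sigma$ and $\tilde\delta$ are not arbitrary choices but are the unique ones compatible with the constraints (unitality of $\tilde\sigma$ and the derivation property forcing $\tilde\delta(1_{R'})=0$), which is perhaps worth remarking on since it shows the extension is canonical. I would close by noting that $\tilde\delta$ restricted to $R$ (identified with $R\times\{0\}$) is $\delta$ and similarly for $\tilde\sigma$, so these genuinely are extensions, which is all that is needed for the applications in the subsequent corollary.
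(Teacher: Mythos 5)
Your proposal is correct and follows essentially the same route as the paper: you define $\tilde{\sigma}(r,n)=(\sigma(r),n)$ and $\tilde{\delta}(r,n)=(\delta(r),0)$ and verify the endomorphism and $\tilde{\sigma}$-derivation identities by the same direct computation. The only addition is your (valid) observation that these formulas are forced by unitality of $\tilde{\sigma}$ and the derivation rule, which the paper does not state but which does not change the argument.
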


\begin{proof}
For any $(r,n) \in R'$, we define $\tilde{\sigma}(r,n) = (\sigma(r),n)$ and $\tilde{\delta}(r,n) =(\delta(r),0)$. We first show that $\tilde{\sigma}$ is a ring endomorphism of $R'$ which respects the
multiplicative identity element.

Clearly, $\tilde{\sigma}$ is additive.
For all $(r,n), (s,m) \in R'$, we get that 
\begin{align*}
\tilde{\sigma}((r,n)(s,m)) &= \tilde{\sigma}(rs+ns+mr,nm) =(\sigma(rs+ns+mr),nm) \\
&=(\sigma(rs)+n\sigma(s)+m\sigma(r),nm)=(\sigma(r)\sigma(s)+n\sigma(s)+m\sigma(r),nm) \\
&=(\sigma(r),n)(\sigma(s),m) = \tilde{\sigma}(r,n) \tilde{\sigma}(s,m).
\end{align*}

\noindent Thus, $\tilde{\sigma}$ is multiplicative. Moreover, $\tilde{\sigma}(0,1_R)=(\sigma(0),1_R)=(0,1_R)$.  
 
It remains to show that $\tilde{\delta}$ is a  $\tilde{\sigma}$-derivation. Clearly, $\tilde{\delta}$ is additive. For all $(r,n), (s,m) \in R'$, we get that
\begin{align*}
\tilde{\delta}((r,n)(s,m)) &=  \tilde{\delta}(rs+ns+mr,nm) = (\delta(rs+ns+mr),0) \\
& = (\delta(rs) +n\delta(s)+m\delta(r),0) = (\sigma(r)\delta(s)+\delta(r)s+n\delta(s)+m\delta(r),0) \\
& = (\sigma(r)\delta(s)+n\delta(s),0)+(\delta(r)s+m\delta(r),0) \\
& = (\sigma(r),n)(\delta(s),0)+(\delta(r),0)(s,m)
= \tilde{\sigma}(r,n)\tilde{\delta}(s,m)+\tilde{\delta}(r,n) (s,m). \qedhere
\end{align*} 
\end{proof}

\begin{cor}\label{cor:associative}
Every non-unital Ore extension $R[x,\sigma,\delta]$
is an associative ring.
\end{cor}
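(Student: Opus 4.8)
The plan is to deduce associativity of the non-unital Ore extension $R[x;\sigma,\delta]$ from the well-known associativity of the classical (unital) Ore extension $R'[x;\tilde\sigma,\tilde\delta]$, where $R'$ is the unitalization of $R$ and $\tilde\sigma,\tilde\delta$ are the extensions furnished by Theorem~\ref{thm:unitalization}. First I would observe that the underlying left $R$-module $R[x;\sigma,\delta]$ sits inside $R'[x;\tilde\sigma,\tilde\delta]$ as the set of polynomials all of whose coefficients lie in the ideal $R \times \{0\}$ of $R'$. Concretely, the map sending $\sum_i r_i x^i$ to $\sum_i (r_i,0) x^i$ is an additive injection onto this subset. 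Since $R \times \{0\}$ is a two-sided ideal of $R'$, this subset is closed under the multiplication of $R'[x;\tilde\sigma,\tilde\delta]$, hence it is a (non-unital) subring, and in particular it is associative.

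The second, and really the only substantive, step is to check that this additive injection is multiplicative, i.e. that the three defining product formulas (1)--(3) for $R[x;\sigma,\delta]$ are exactly the restrictions of the classical product on $R'[x;\tilde\sigma,\tilde\delta]$. For this I would note that on $R'$ the operators $\tilde\sigma$ and $\tilde\delta$ restrict to $\sigma$ and $\delta$ on the ideal $R\times\{0\}$, and more precisely $\tilde\sigma(r,0)=(\sigma(r),0)$, $\tilde\delta(r,0)=(\delta(r),0)$; consequently any composition of $i$ copies of $\tilde\sigma$ and $n-i$ copies of $\tilde\delta$ applied to $(b,0)$ equals $(\,(\text{that same composition of }\sigma,\delta)(b),\,0)$, and summing over all $\binom{n}{i}$ orderings gives $\tilde\pi_i^n(b,0) = (\pi_i^n(b),0)$. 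Plugging this into the classical Ore product formula $(ax^n)(bx^m)=\sum_{i=0}^n a\,\tilde\pi_i^n(b)\,x^{i+m}$ in $R'[x;\tilde\sigma,\tilde\delta]$ reproduces formula (1) verbatim once we identify $ax^i$ with $(a,0)x^i$; the mixed cases $a(bx^m)$ and $(ax^n)b$ are handled the same way (they are the $n=0$ and $m=0$ specializations). Here I would quietly invoke the standard fact that in the unital Ore extension $R'[x;\tilde\sigma,\tilde\delta]$ the product of two monomials is given by these $\tilde\pi_i^n$-formulas — this is the usual combinatorial expansion of $x^n b = \sum_i \tilde\pi_i^n(b) x^i$ obtained by iterating the relation \eqref{eq:DefRelation}, and it is part of the construction cited from \cite{GoodearlWarfield}.

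Having identified $R[x;\sigma,\delta]$ with a subring of the associative ring $R'[x;\tilde\sigma,\tilde\delta]$, associativity of $R[x;\sigma,\delta]$ is immediate. I expect the main obstacle to be purely bookkeeping: being careful that the symbols $x^i$ appearing in the definition of $R[x;\sigma,\delta]$ (described in the excerpt as mere placeholders, with $R[x;\sigma,\delta]\cap R$ sitting in degree $0$) are matched correctly with the honest powers of $x$ in $R'[x;\tilde\sigma,\tilde\delta]$, and that the degree-$0$ part of $R[x;\sigma,\delta]$, which is literally a copy of $R$, corresponds to $R\times\{0\}\subseteq R'$ rather than to all of $R'$. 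No estimate or limiting argument is needed; the whole proof is a transport-of-structure argument along the inclusion $R\hookrightarrow R'$, coefficientwise.
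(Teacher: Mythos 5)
Your proposal is correct and is essentially the paper's own argument: the paper likewise extends $\sigma,\delta$ to the unitalization $R'$ via Theorem~\ref{thm:unitalization}, embeds $R[x;\sigma,\delta]$ as an ideal of the associative unital Ore extension $R'[x;\tilde\sigma,\tilde\delta]$, and concludes. The only difference is that you spell out the verification that the embedding is multiplicative (via $\tilde\pi_i^n(b,0)=(\pi_i^n(b),0)$), which the paper leaves implicit in the word ``naturally''.
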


\begin{proof}
If $R$ is a non-unital ring, $\sigma$ is a ring endomorphism of $R$ and $\delta$ is a $\sigma$-derivation on $R$, then we can extend $\sigma$ and $\delta$ to the unitalization $R'$ and form the Ore extension $R'[x; \tilde{\sigma}, \tilde{\delta}]$. The non-unital Ore extension $R[x,\sigma,\delta]$ is naturally embedded as an ideal in the associative ring $R'[x; \tilde{\sigma}, \tilde{\delta}]$. For a proof of the associativity of $R'[x; \tilde{\sigma}, \tilde{\delta}]$, see e.g. \cite{nystedt2013}.
\end{proof}

\section{Simplicity of non-unital differential polynomial rings}\label{sec:simplicity}

In this section we will give a characterization of simple non-unital differential polynomial rings (see Theorem~\ref{thm:simplicitynonunitality}).

\begin{prop}\label{prop:idempotents}
Let $R$ be a ring and let $\delta : R \to R$ be a derivation.
If $e\in R$ is an idempotent, then the following five assertions hold:
\begin{enumerate}[{\rm (a)}]
	\item $e \delta(e) e = 0$;
	\item If $e\in Z(R)$, then $\delta(e)=0$;
	\item $I=ReR$ is a $\delta$-invariant ideal of $R$.
	\item If $\delta(e)=0$, then $\delta(er) = e\delta(r)$ and $\delta(re) =\delta(r)e$
	for every $r \in R$. In particular, $\delta(ere) = e\delta(r)e$ for every $r \in R$. 
	\item If $\delta(e)=0$, then the restriction of $\delta$ to $eRe$ is a derivation of $eRe$. If $\delta$ is an inner derivation, then so is its restriction to $eRe$. 	
\end{enumerate}
\end{prop}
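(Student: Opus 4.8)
The plan is to squeeze all five parts out of the single identity obtained by applying $\delta$ to $e = e^2$, namely
\[
\delta(e) = \delta(e)e + e\delta(e),
\]
while taking care never to invoke a multiplicative identity, since $R$ is not assumed unital.

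For (a), I would multiply the displayed identity on the left and on the right by $e$; using $e^2 = e$ one gets $e\delta(e)e = 2\,e\delta(e)e$, hence $e\delta(e)e = 0$. For (b), if $e$ is central the identity reads $\delta(e) = 2e\delta(e)$, and multiplying on the left by $e$ gives $e\delta(e) = 2e\delta(e)$, so $e\delta(e) = 0$ and therefore $\delta(e) = 2e\delta(e) = 0$ (alternatively, deduce (b) directly from (a), since for central $e$ one has $e\delta(e)e = \delta(e)e = e\delta(e)$). For (d), if $\delta(e) = 0$ the Leibniz rule immediately yields $\delta(er) = \delta(e)r + e\delta(r) = e\delta(r)$ and $\delta(re) = \delta(r)e + r\delta(e) = \delta(r)e$, and applying these in turn gives $\delta(ere) = e\delta(re) = e\delta(r)e$.

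For (c), that $I = ReR$ is a two-sided ideal of $R$ is immediate from associativity. For $\delta$-invariance it suffices, by additivity, to check that $\delta(aeb) \in ReR$ for all $a, b \in R$. Expanding with the Leibniz rule,
\[
\delta(aeb) = \delta(a)\,e\,b + a\,\delta(e)\,b + a\,e\,\delta(b),
\]
the outer two terms lie in $ReR$ by inspection, and for the middle one I would substitute $\delta(e) = \delta(e)e + e\delta(e)$ to write $a\,\delta(e)\,b = (a\delta(e))\,e\,b + a\,e\,(\delta(e)b) \in ReR$. This re-use of the structural identity for $\delta(e)$ is the only spot where something slightly nonobvious occurs, so I expect it to be the (minor) crux of the proposition.

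For (e), assume $\delta(e) = 0$. By (d), $\delta(ere) = e\delta(r)e \in eRe$, so $\delta$ restricts to a self-map of $eRe$, and it is a derivation there because the Leibniz identity is simply inherited (when $a, b \in eRe$, all three terms of $\delta(ab) = \delta(a)b + a\delta(b)$ already lie in $eRe$). For the final claim, suppose $\delta$ is inner, say $\delta(\cdot) = [c, \cdot]$ with $c \in R$ (if $c$ lies in the unitalization $R'$, its scalar part cancels in the commutator, so this is no loss of generality). Put $c' = ece \in eRe$. Since $ea = ae = a$ for every $a \in eRe$, one computes $c'a = eca$ and $ac' = ace$, while $a = eae$ and (d) give $\delta(a) = e\delta(a)e = e(ca - ac)e = eca - ace$. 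Hence $\delta(a) = c'a - ac' = [c', a]$, so the restriction of $\delta$ to $eRe$ is the inner derivation induced by $ece$.
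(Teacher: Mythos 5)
Your proof is correct and follows essentially the same route as the paper: all five parts are derived from $\delta(e)=e\delta(e)+\delta(e)e$ together with the Leibniz rule, and your verification of (e) via $c'=ece$ matches the paper's computation $\delta(ere)=(eae)(ere)-(ere)(eae)$. The only cosmetic difference is in (c), where you handle the middle term $a\delta(e)b$ by substituting $\delta(e)=\delta(e)e+e\delta(e)$, whereas the paper writes $es=e\cdot es$ and expands; both are the same elementary trick.
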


\begin{proof}
(a) Using that $\delta$ is a derivation, we get that
$\delta(e)=\delta(ee)=e\delta(e)+\delta(e)e$.
Thus, $e\delta(e)e=e^2\delta(e)e + e\delta(e)e^2 = 2\cdot e\delta(e)e$.
This shows that $e\delta(e)e=0$.

(b) Suppose that $e\in Z(R)$. By (a) we get that
$\delta(e) e = e \delta(e) = e^2 \delta(e) = e\delta(e)e=0$.
Using this, we notice that
$\delta(e)=\delta(e^2)=e\delta(e) + \delta(e)e = 0 + 0 = 0$.

(c) It is clear that $I$ is an ideal of $R$.
For any elements $r,s\in R$ we get that 
\begin{displaymath}
	\delta(es) =\delta(ees) = e\delta(es)+\delta(e)es = ee\delta(es)+\delta(e)es \in I
\end{displaymath}
and hence
\begin{displaymath}
	\delta(res) = r\delta(es)+\delta(r)es \in I.
\end{displaymath}
This shows that $\delta(I) \subseteq I$.

(d) Suppose that $\delta(e)=0$.
Take $r\in R$.
Then
$\delta(er)= e\delta(r)+\delta(e)r = e\delta(r)$ and $\delta(re) = r\delta(e)+\delta(r)e =\delta(r)e$. (Note that $e$ need not be an idempotent for the proof to work.)

(e) Suppose that $\delta(e)=0$. By (d), $\delta$ maps $eRe$ into $eRe$. Thus, the restriction of $\delta$ to $eRe$ is clearly a derivation. If we suppose that $a\in R$ is such that $\delta(r) = ar-ra$ for all $r\in R$, then for $ere \in eRe$ we get
\begin{displaymath}
\delta(ere) = e\delta(ere)e = eaere-ereae =(eae)(ere)-(ere)(eae).
\end{displaymath}
Hence, the restriction of $\delta$ to $eRe$ is the inner derivation on $eRe$ induced by $eae$. 
\end{proof}

\begin{rem}
If an idempotent $e$ is not central, then $\delta(e)=0$ need not hold.
Indeed, consider the matrix ring $R:=M_2(\R)$ of 2-by-2 matrices over the real numbers.
The matrix $e = \left(\begin{smallmatrix}1&1\\0&0\end{smallmatrix}\right)$
is idempotent.
We define a derivation $\delta : R \to R$
by $\delta(M):=\left(\begin{smallmatrix}0&0\\1&0\end{smallmatrix}\right) M - M \left(\begin{smallmatrix}0&0\\1&0\end{smallmatrix}\right)$,
for $M \in R$.
An easy calculation shows that
$\delta(e)=\left(\begin{smallmatrix}-1&0\\1&1\end{smallmatrix}\right) \neq \left(\begin{smallmatrix}0&0\\0&0\end{smallmatrix}\right)$.
\end{rem}

\begin{defn}\label{def:invariant}
Let $R[x;\sigma,\delta]$ be a non-unital Ore extension.
An ideal $J$ of $R$ is said to be \emph{$\sigma$-$\delta$-invariant} if $\sigma(J)\subseteq J$ and
$\delta(J) \subseteq J$. If $\{0\}$ and $R$ are the only $\sigma$-$\delta$-invariant ideals of $R$, then $R$ is called \emph{$\sigma$-$\delta$-simple}.
\end{defn}

\begin{prop}\label{sigmadeltasimple}
If $R[x;\sigma,\delta]$ is a simple non-unital Ore extension,
then $R$ is $\sigma$-$\delta$-simple.
\end{prop}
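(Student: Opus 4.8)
The plan is to argue directly: starting from an arbitrary nonzero $\sigma$-$\delta$-invariant ideal $J$ of $R$, I will build a nonzero ideal of $S := R[x;\sigma,\delta]$ whose ``constant term part'' is precisely $J$, and then use simplicity of $S$ to conclude that $J = R$. This shows that $\{0\}$ and $R$ are the only $\sigma$-$\delta$-invariant ideals, i.e.\ that $R$ is $\sigma$-$\delta$-simple.

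Concretely, I would set $J[x] := \{\, j_0 + \sum_{i=1}^{n} j_i x^i \mid n \geq 1,\ j_0,\ldots,j_n \in J \,\}$, viewed as a subset of $S$. Since $J \neq \{0\}$, the set $J[x]$ is nonzero; and since the degree-zero coefficient of any element of $J[x]$ lies in $J$, we have $J[x] = S$ precisely when $J = R$. The main step is to check that $J[x]$ is a two-sided ideal of $S$. For this I would use the multiplication rules for the Ore extension: for $a \in R$ and $j \in J$, one has $(ax^n)(jx^m) = \sum_{i=0}^{n} a\,\pi_i^n(j)\,x^{i+m}$ and $(jx^n)(ax^m) = \sum_{i=0}^{n} j\,\pi_i^n(a)\,x^{i+m}$, together with the degree-zero identities $a(jx^m) = (aj)x^m$, $(jx^n)a = \sum_{i=0}^{n} j\,\pi_i^n(a)\,x^i$, and so on. Since each $\pi_i^n$ is by definition a sum of compositions of copies of $\sigma$ and of $\delta$, and $J$ is $\sigma$-$\delta$-invariant, we get $\pi_i^n(j) \in J$ for every $j \in J$; hence $a\,\pi_i^n(j) \in RJ \subseteq J$ and $j\,\pi_i^n(a) \in JR \subseteq J$. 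By additivity, every product of an element of $S$ with an element of $J[x]$ (on either side) again has all its coefficients in $J$, so $J[x]$ is an ideal of $S$.

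Since $S$ is simple and $J[x]$ is a nonzero ideal of $S$, it follows that $J[x] = S$, and therefore $J = R$, as desired. I do not expect a genuine obstacle here; the only points requiring a little care are that $R$ need not be unital — so one must verify that $J[x]$ absorbs multiplication by \emph{all} of $S$, which is exactly what the identities above ensure — and the convention that a nonzero two-sided ideal of a simple ring must be the whole ring.
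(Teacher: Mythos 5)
Your proposal is correct and follows essentially the same route as the paper: both form the set of polynomials with all coefficients in $J$, verify via the product rules (using $\sigma$-$\delta$-invariance of $J$ for left multiplication and $JR\subseteq J$ for right multiplication) that it is a nonzero two-sided ideal of $R[x;\sigma,\delta]$, and conclude $J=R$ from simplicity. No issues.
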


\begin{proof}
Suppose that $R[x;\sigma,\delta]$ is  simple.
Take a nonzero $\sigma$-$\delta$-invariant ideal $J$ of $R$.
We wish to show that $J = R$.
Let $I$ be the additive subgroup of $R[x;\sigma,\delta]$ 
consisting of finite sums $j_0 + \sum_{k=1}^n j_k x^k$ where $j_0,j_1,\ldots,j_n \in J$.
Clearly, $I$ is nonzero.
By the product rule (see \eqref{eqn:prod1}--\eqref{eqn:prod3}) it is clear that $I$ is a right ideal of $R[x;\sigma,\delta]$.
From the $\sigma$-$\delta$-invariance of $J$, and the product rule, it follows
that $I$ is also a left ideal of $R[x;\sigma,\delta]$.
By the simplicity of $R[x;\sigma,\delta]$, we get that $I=R[x;\sigma,\delta]$.
Thus, $J = R$. This shows that $R$ is $\sigma$-$\delta$-simple.
\end{proof}

\begin{rem}\label{rem:localunits}
Let $R$ be an arbitrary ring.

(a) If $e$ is an idempotent of $R$, 
then the set $eRe$ is called a \emph{corner subring} of $R$ and $e$ is its multiplicative identity element.

(b) Recall that $R$ is said to be \emph{locally unital}, if for each finite set $F \subseteq R$ there is an idempotent $e\in R$ such that $F \subseteq eRe$.
In that case, $ex=xe=x$ for each $x\in F$, and $e$ is referred to as a \emph{local unit} for the set $F$.

(c) If $R$ is locally unital, then a set $E\subseteq R$ of idempotents which constitute local units of $R$, is said to be a \emph{set of local units} for $R$.
In that case, $E=E(R)$ (the set of all idempotents of $R$) is obviously a set of local units.
However, it may also be possible to exclude som idempotents and choose $E$ to be a smaller set (see e.g. Example~\ref{ex:nonunital}).
\end{rem}

\begin{prop}\label{prop:cornerIso}
Let $S:=R[x; \delta]$ be a non-unital differential polynomial ring. If $e\in R$ is a nonzero idempotent such that $\delta(e) = 0$, then the corner subring $eSe$ is isomorphic to the unital Ore extension $eRe[x;d]$, where $d$ is the restriction of $\delta$ to $eRe$. 
\end{prop}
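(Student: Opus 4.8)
The plan is to write down the obvious candidate isomorphism and check it is a well-defined ring homomorphism that is bijective. First I would describe the elements of $eSe$ concretely. A general element of $S = R[x;\delta]$ has the form $\sum_i r_i x^i$ with $r_i \in R$, and using the product rules \eqref{eqn:prod1}--\eqref{eqn:prod3} together with $\delta(e)=0$ and Proposition~\ref{prop:idempotents}(d), one computes that $e\big(\sum_i r_i x^i\big)e = \sum_i e r_i e \, x^i$; indeed $a(bx^m)=abx^m$ handles left multiplication by $e$, while $(\sum_i r_i x^i)e = \sum_i\sum_{j\le i}\binom{i}{j}r_i\delta^{i-j}(e)x^j = \sum_i r_i e\, x^i$ since all terms with $j<i$ vanish because $\delta(e)=0$. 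Hence $eSe = \{\sum_i (er_i e)\,x^i : r_i \in R\}$, which as a left $eRe$-module is free with basis $1,x,x^2,\dots$ — here $1 = e$ plays the role of the identity, and $e$ is indeed a two-sided identity for $eSe$ by Remark~\ref{rem:localunits}(a). Note also that $d := \delta|_{eRe}$ is a genuine derivation of $eRe$ by Proposition~\ref{prop:idempotents}(e).

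Next I would define $\varphi : eRe[x;d] \to eSe$ by $\varphi\big(\sum_i a_i x^i\big) = \sum_i a_i x^i$ for $a_i \in eRe$ (reading the right-hand side inside $S$, where each $a_i \in eRe \subseteq R$), and check it is a ring homomorphism. Additivity is immediate. For multiplicativity it suffices, by distributivity and left $eRe$-linearity of both products, to verify $\varphi((a x^n)(b x^m)) = \varphi(ax^n)\varphi(bx^m)$ for $a,b \in eRe$. On the left, in $eRe[x;d]$ we have $(ax^n)(bx^m) = \sum_{i=0}^n \binom{n}{i} a\, d^{n-i}(b)\, x^{i+m}$. On the right, in $S$ we have $(ax^n)(bx^m) = \sum_{i=0}^n \binom{n}{i} a\, \delta^{n-i}(b)\, x^{i+m}$. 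Since $b \in eRe$ and $\delta(e)=0$, Proposition~\ref{prop:idempotents}(d) gives $\delta(ere) = e\delta(r)e$, hence $\delta$ maps $eRe$ into $eRe$ and $d^{k}(b) = \delta^{k}(b)$ for all $k$; thus the two expressions agree. So $\varphi$ is a homomorphism.

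Finally, bijectivity: $\varphi$ is injective because $1,x,x^2,\dots$ form a left $eRe$-basis of $eRe[x;d]$ and their images are left $eRe$-independent in $S$ (they are even left $R$-independent in $S$), and $\varphi$ is surjective by the description of $eSe$ obtained in the first step, since every element of $eSe$ has the form $\sum_i (er_ie)x^i$ with $er_ie \in eRe$. Therefore $\varphi$ is an isomorphism $eRe[x;d] \cong eSe$. I expect the only mild subtlety — the "hard part", such as it is — to be the bookkeeping in the first step: one must use $\delta(e)=0$ to kill the lower-order terms arising from $(ax^n)e$ and to ensure that conjugation by $e$ is compatible with the derivation, i.e. that $d$ is well defined and agrees with $\delta$ on $eRe$; everything else is routine verification.
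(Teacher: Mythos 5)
Your proposal is correct and follows essentially the same route as the paper: rewrite every element of $eSe$ as $\sum_i ea_ie\,x^i$ using $\delta(e)=0$, note that $d=\delta|_{eRe}$ is a derivation of the unital ring $eRe$, and identify the two rings. The paper's proof simply states that ``the conclusion follows immediately'' at that point, whereas you carry out the (routine but worthwhile) verification that the multiplication formulas match and that the map is bijective.
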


\begin{proof}
By Proposition~\ref{prop:idempotents}, $d$ is a derivation on the unital ring $eRe$. The elements of $eSe$ are of the form $\sum_i ea_i x^i e$. Using the fact that $\delta(e)=0$ we can rewrite them to the form $\sum_i ea_i e x^i$. The conclusion follows immediately. 
\end{proof}

\begin{prop}\label{prop:simpleCorner}
If $S$ is a simple ring,
then for every nonzero idempotent $e\in S$
the corner subring $eSe$ is also simple. In particular, $Z(eSe)$ is a field. 
\end{prop}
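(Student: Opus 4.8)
The plan is to verify the two defining properties of simplicity for the corner ring $T := eSe$ directly. Since $T$ is unital with identity element $e$ and $e \neq 0$, we have $e = e \cdot e \in T^2$, so $T^2 \neq \{0\}$; it therefore remains to show that $T$ has no proper nonzero two-sided ideal. So let $J$ be a nonzero two-sided ideal of $T$ and let $I := J + SJ + JS + SJS$ be the two-sided ideal of $S$ generated by $J$ (this is the standard formula, valid even though $S$ need not be unital). Since $J \subseteq I$, the ideal $I$ is nonzero, and simplicity of $S$ forces $I = S$; in particular $eSe = eIe$.

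The heart of the argument is to show that $eIe \subseteq J$. Here one uses repeatedly that every $j \in J$ satisfies $ej = je = j$, because $J \subseteq eSe$. Squeezing each of the four summands of $I$ between copies of $e$, and inserting or absorbing $e$'s using these relations, then yields $eJe = J$, $\ e(SJ)e \subseteq (eSe)J$, $\ e(JS)e \subseteq J(eSe)$, and $e(SJS)e \subseteq (eSe)J(eSe)$; the three right-hand sides are contained in $J$ because $J$ is an ideal of $eSe$. Adding these up gives $eSe = eIe \subseteq J \subseteq eSe$, hence $J = eSe = T$. This proves that $eSe$ is simple.

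Finally, the ``in particular'' assertion is the classical fact that the center of a simple unital ring is a field, for which I would include a short proof: $Z(T)$ is a nonzero commutative unital ring (it contains $e \neq 0$), and given a nonzero $z \in Z(T)$, the set $Tz$ is a nonzero two-sided ideal of the simple ring $T$, hence equals $T$; choosing $t \in T$ with $tz = e$ and using that $z$ is central, one checks that $t$ commutes with every element of $T$, so $t \in Z(T)$ and $z$ is a unit in $Z(T)$. The only real obstacle anywhere is bookkeeping: because $S$ is not assumed unital, one must pass to the ideal \emph{generated} by $J$ (which is why the summand $J$ itself is retained, guaranteeing $I \neq \{0\}$) and must check that the sandwiching manipulations are valid — but all of this is handled automatically by the idempotent relations $ej = je = j$ for $j \in J$, so no genuine difficulty arises.
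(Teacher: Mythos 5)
Your proof is correct and follows essentially the same route as the paper's: pass to the two-sided ideal of $S$ generated by (an element of) the nonzero ideal of $eSe$, invoke simplicity of $S$, and sandwich with $e$ using $ej=je=j$ to land back inside the ideal of $eSe$. The only cosmetic differences are that the paper works with a single nonzero element $x=exe$ and the ideal $SxS$ rather than the full ideal $J$, and that it cites rather than reproves the fact that the center of a simple unital ring is a field.
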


\begin{proof}
Suppose that $S$ is a simple ring and let $e \in S$ be a nonzero idempotent. Notice that $eSe$ is nonzero since $e=eee\in eSe$.
Let $I$ be a nonzero ideal of $eSe$. We will show that $I=eSe$.
Take a nonzero $x \in I \subseteq eSe$.
Notice that $x=exe \in SxS$.
By simplicity of $S$, we get that $SxS+\Z x = SxS =S$.
In particular, we may write
$e = \sum_{i=1}^n y_i x z_i$ where $y_1,\ldots,y_n, z_1,\ldots,z_n$ all belong to $S$.
Thus,
$e=e\cdot e \cdot e
= \sum_{i=1}^n e(y_i x z_i) e
= \sum_{i=1}^n (ey_i)(exe)(z_i e)
= \sum_{i=1}^n (ey_ie) x (e z_i e)$.
This shows that $e\in I$.
Using that $e$ is the multiplicative identity element of $eSe$ we conclude that $I=eSe$. The center of any unital simple ring is a field, and hence the last statement also follows. 
\end{proof}

\begin{lem}\label{CornerGenerated}
Let $R$ be a ring, let $e \in R$
be an idempotent, and let $\delta : R \rightarrow R$ be a derivation
such that $\delta(e)=0$.
Consider the non-unital differential polynomial ring $S:=R[x;\delta]$.
The corner subring $eSe$ is generated by $eRe$ and the set $\{ex^ie\}_{i \in \N}$.
\end{lem}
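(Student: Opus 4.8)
The plan is to show that an arbitrary element of $eSe$ can be rewritten, using only the ring operations, in terms of elements of $eRe$ and the elements $ex^ie$. A general element of $S = R[x;\delta]$ has the form $\sum_i a_i x^i$ with $a_i \in R$, so a general element of $eSe$ has the form $e\bigl(\sum_i a_i x^i\bigr)e = \sum_i e a_i x^i e$. First I would observe that it suffices to treat a single summand $e a x^i e$, since $eSe$ is closed under addition. The key computational tool is Proposition~\ref{prop:idempotents}(d): because $\delta(e)=0$, we have $\delta(er)=e\delta(r)$ and $\delta(re)=\delta(r)e$ for all $r\in R$, and consequently $ex^i$ and $x^i e$ behave well with respect to the idempotent; concretely, using the product rule \eqref{eqn:prod1}--\eqref{eqn:prod3} together with $\delta^k(e)=0$ for all $k\ge 1$, one checks that $(ea x^i) e = ea\bigl(x^i e\bigr)$ and that inserting an $e$ is harmless.

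The main step is to prove the identity $e a x^i e = (eae)\cdot(e x^i e)$ inside $S$, for every $a \in R$ and every $i \in \N$. Once this is established, every summand $e a_i x^i e$ of a general element of $eSe$ is exhibited as a product of $eae \in eRe$ and $e x^i e \in \{ex^je\}_{j\in\N}$, so $eSe$ is generated (as a ring, under addition and multiplication) by $eRe \cup \{ex^ie\}_{i\in\N}$, which is the claim. To prove the identity I would compute $e x^i e$ first: by the product rule, $x^i e = \sum_{k=0}^{i}\binom{i}{k}\delta^{i-k}(e)x^k = e x^i$ since $\delta^{i-k}(e)=0$ whenever $i-k\ge 1$ and the $k=i$ term is just $e x^i$. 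Hence $e x^i e = e\,e\,x^i = e x^i$. Similarly $e a x^i e = e a (e x^i) $ only after we know $x^i e = e x^i$; more carefully, $e a x^i e = e a (x^i e) = e a (e x^i) = (eae)x^i$ using $ee = e$ again, and likewise $(eae)(e x^i e) = (eae)(ex^i) = (eae)x^i$. Comparing the two gives $e a x^i e = (eae)(e x^i e)$, as desired.

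I do not expect any serious obstacle here; the only point requiring a little care is the bookkeeping with the product rule when an idempotent sits to the right of $x^i$, and the repeated use of $\delta^k(e)=0$ (which follows from $\delta(e)=0$ by induction, since $\delta^{k}(e)=\delta(\delta^{k-1}(e))=\delta(0)=0$). One should also note at the outset that $eSe$ really does consist exactly of the elements $\sum_i e a_i x^i e$, which is immediate from the left $R$-module description of $S$ and the definition of the corner subring. With these observations in place the proof is a short direct computation, essentially the one already indicated in the proof of Proposition~\ref{prop:cornerIso}, so I would phrase it concisely: reduce to a single monomial, show $x^i e = e x^i$, and conclude $e a x^i e = (eae)(ex^ie)$.
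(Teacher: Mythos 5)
Your proposal is correct and follows essentially the same route as the paper: write a general element of $eSe$ as $\sum_i e a_i x^i e$ and use the product rule together with $\delta^k(e)=0$ to factor each monomial as $(e a_i e)(e x^i e)$. The only cosmetic caveat is that in the non-unital setting $x^i$ is a placeholder rather than an element of $S$, so the intermediate identity ``$x^i e = e x^i$'' should be read as $(a x^i)e = a e x^i$ for $a \in R$ — which is exactly the computation you carry out in the line $e a x^i e = (e a x^i)e = (eae)x^i$, so the substance is unaffected.
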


\begin{proof}
Take $a= a_0 + \sum_{i=1}^n a_i x^i \in eSe$, where $a_0,a_1,\ldots,a_n \in R$.
Obviously, $a=eae$ and hence
$a = ea_0e + \sum_{i=1}^n e a_i x^i e$.
Using that $\delta(e)=0$, we notice that for any $i\in \{1,\ldots,n\}$, we have
\begin{displaymath}
	e a_i x^i e = (e a_i x^i e) e
	= \left(
	\sum_{j=0}^i \binom{i}{j} e a_i \delta^{i-j}(e) x^j \right) e
	=
	e a_i e x^i e = (e a_i e) (e x^i e). \qedhere
\end{displaymath}
\end{proof}

An element $d \in R$, of an $s$-unital ring $R$, is said to be an \emph{$s$-unit} for a finite set $F \subseteq R$ if $df=fd=f$ for every $f\in F$.
One can show, that if $R$ is an $s$-unital ring, then every finite subset of $R$ has an $s$-unit.

\begin{lem}\label{lemma:deltad}
Let $R$ be a ring with a derivation $\delta$. If $a \in R$ and $d$ is an $s$-unit for the set $\{a,\delta(a)\}$, then $a\delta(d) =0.$
\end{lem}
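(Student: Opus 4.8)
The key relation to exploit is that $d$ is an $s$-unit for $\{a, \delta(a)\}$, which by definition means $da = ad = a$ and $d\delta(a) = \delta(a)d = \delta(a)$. The plan is to apply $\delta$ to the equation $ad = a$ and use the Leibniz rule together with these identities to isolate $a\delta(d)$.

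First I would differentiate $a = ad$ to obtain $\delta(a) = \delta(ad) = \delta(a)d + a\delta(d)$. Since $d$ is an $s$-unit for $\delta(a)$, we have $\delta(a)d = \delta(a)$, so substituting gives $\delta(a) = \delta(a) + a\delta(d)$, and cancelling $\delta(a)$ (additive cancellation in the abelian group $(R,+)$) yields $a\delta(d) = 0$, as desired. That is essentially the entire argument.

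I do not anticipate any serious obstacle here; the only thing to be careful about is that we are working in a non-unital ring, so one must not assume $d$ acts as a unit on all of $R$ — but we only ever use that $d$ is an $s$-unit for the specific finite set $\{a, \delta(a)\}$, which is exactly what is needed. It is also worth noting that $d$ being an $s$-unit for $\delta(a)$ on the \emph{right} is the only part of that hypothesis we actually use; the two-sided condition on $a$ is used only through $ad = a$. So the proof is a one-line computation once the definitions are unwound.
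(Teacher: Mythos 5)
Your proof is correct and is essentially identical to the paper's: both apply $\delta$ to $a = ad$, use the Leibniz rule, substitute $\delta(a)d = \delta(a)$, and cancel additively. Your closing remark that only $ad=a$ and the right-sided relation $\delta(a)d=\delta(a)$ are actually needed is accurate but does not change the argument.
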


\begin{proof}
We get that
$\delta(a) = \delta(ad) = a\delta(d)+\delta(a)d= a\delta(d)+\delta(a)
$.
Thus, $a\delta(d) =0.$
\end{proof}

\begin{defn}\label{def:Hn}
Let $L$ be a non-empty subset of an arbitrary non-unital Ore extension $R[x;\sigma,\delta]$.
For each positive integer $n$ we define
\begin{displaymath}
	H_n(L) := \left\{ r\in R \ \big\lvert \ \exists c_0, c_1, \ldots, c_{n-1} \in R \quad \text{such that} \quad r x^n 
	+ \sum_{i=0}^{n-1} c_i x^i \in L \right\}.
\end{displaymath}
\end{defn}

\begin{lem}\label{lemma:Hn}
Let $R[x;\sigma,\delta]$ be a non-unital Ore extension.
For every positive integer $n$, the following three assertions hold:
\begin{enumerate}[{\rm (a)}]
	\item\label{prop:Ll} If $L$ is a left ideal of $R[x;\sigma,\delta]$, then $H_n(L)$ is a left ideal of $R$;
	\item\label{prop:Lr} If $L$ is a right ideal of $R[x;\sigma,\delta]$, and $\sigma$ is surjective, then $H_n(L)$ is a right ideal of $R$;
	\item\label{prop:Linv} 
	If $R$ is $s$-unital, $\sigma=\id_R$ and $L$ is an ideal of $R[x;\sigma,\delta]$, then $H_n(L)$ is a $\delta$-invariant ideal of $R$.
\end{enumerate}
\end{lem}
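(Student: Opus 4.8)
The plan is to prove the three assertions in order, exploiting the product formulas \eqref{eqn:prod1}--\eqref{eqn:prod3} to track the leading coefficient of a polynomial of the form $rx^n + \sum_{i<n}c_ix^i$.

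For part \eqref{prop:Ll}, I would first check that $H_n(L)$ is an additive subgroup: given $r,r'\in H_n(L)$ with witnessing elements $p = rx^n+\sum_{i<n}c_ix^i\in L$ and $p' = r'x^n+\sum_{i<n}c'_ix^i\in L$, the difference $p-p'\in L$ witnesses $r-r'\in H_n(L)$. To see closure under left multiplication by $a\in R$, take $r\in H_n(L)$ with witness $p\in L$; then $ap\in L$ since $L$ is a left ideal, and by \eqref{eqn:prod2} we have $a(rx^n) = (ar)x^n$, while $a\cdot c_ix^i = (ac_i)x^i$ has degree $<n$, so the $x^n$-coefficient of $ap$ is $ar$, giving $ar\in H_n(L)$. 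This is entirely routine.

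For part \eqref{prop:Lr}, the additive-subgroup argument is the same. For closure under right multiplication by $b\in R$: take $r\in H_n(L)$ with witness $p\in L$, so $pb\in L$. Using \eqref{eqn:prod3}, $(rx^n)b = \sum_{i=0}^n\binom{n}{i}r\delta^{n-i}(b)x^i$, whose $x^n$-coefficient is $r\cdot\pi_n^n(b)$ — in the differential case $r\delta^0(b)=rb$, but in general $\pi_n^n = \sigma\circ\cdots\circ\sigma = \sigma^n$, so the leading coefficient is $r\sigma^n(b)$; the lower-degree terms $c_ix^i\cdot b$ contribute only to degrees $<n$. Hence $r\sigma^n(b)\in H_n(L)$. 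Since $\sigma$ is surjective, $\sigma^n$ is surjective, so as $b$ ranges over $R$ so does $\sigma^n(b)$, and we conclude $rR = r\sigma^n(R)\subseteq H_n(L)$, i.e. $H_n(L)$ is a right ideal. The only subtlety here is correctly identifying $\pi_n^n=\sigma^n$ and invoking surjectivity of $\sigma$.

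Part \eqref{prop:Linv} is the main obstacle and the place where $s$-unitality and $\sigma=\id_R$ are genuinely used. Combining \eqref{prop:Ll} and \eqref{prop:Lr} (with $\sigma=\id_R$ surjective) already gives that $H_n(L)$ is a two-sided ideal, so it remains to show $\delta$-invariance: given $r\in H_n(L)$ with witness $p = rx^n+\sum_{i<n}c_ix^i\in L$, I must produce a witness for $\delta(r)$. The natural move is to compute the commutator-type element $x\cdot p - p\cdot d$ for a suitable $s$-unit $d$, or more directly $xp\in L$ and compare with $px^0$-type products. From \eqref{eqn:prod1} with $n$ replaced by $1$, $x\cdot(rx^n) = \sum_{i=0}^1\binom{1}{i}\delta^{1-i}(r)x^{i+n} = \delta(r)x^n + rx^{n+1}$; so $xp$ has $x^{n+1}$-coefficient $r$ and $x^n$-coefficient $\delta(r)$ plus contributions from the lower $c_ix^i$ terms (which only reach degree $n$ from the single term $c_{n-1}x^{n-1}$, contributing $c_{n-1}$ to... actually $\delta^0(c_{n-1})$ at degree $n$, i.e. $c_{n-1}$ — wait, $x\cdot c_{n-1}x^{n-1} = \delta(c_{n-1})x^{n-1} + c_{n-1}x^n$). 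Thus $xp\in L$ has $x^{n+1}$-coefficient $r$, which is not what I want. Instead I would multiply on the right: choose, using $s$-unitality, an element $d\in R$ that is an $s$-unit for the finite set consisting of $r$ and all the $c_i$ and all their relevant $\delta$-images (or more cleverly, first pass to $pd\in L$ to normalize, then use $xp - (\text{something})\in L$). The cleanest route: $p\in L$ implies $xp\in L$ and $pd\in L$ for any $d$; consider $q := xp - \lambda$ for an appropriate $\lambda\in L$ built from $p$ times an $s$-unit, engineered so that the top degree $n+1$ cancels. Concretely, if $d$ is an $s$-unit for $\{r\}$, then $(rx^{n+1})$ appears in $xp$, and I want to subtract $(rx^{n+1}d) = rx^{n+1}$ which lies in $L$ only if... hmm, $rx^{n+1}$ need not be in $L$. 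The correct trick, following the pattern of Lemma~\ref{lemma:deltad}: pick $d\in R$ an $s$-unit for the finite set of all coefficients of $p$ together with their $\delta$-images; then $pd = p$ (since $d$ fixes each coefficient on the right — using $\sigma=\id$ so that $(r_ix^i)d = \sum_j\binom{i}{j}r_i\delta^{i-j}(d)x^j$ and the top term is $r_id$), actually $pd\ne p$ in general because of the $\delta^{i-j}(d)$ terms. So instead I would consider $dp$: by \eqref{eqn:prod2}, $d\cdot(r_ix^i) = dr_ix^i$, so $dp = dr\,x^n + \sum dr_i x^i$; choosing $d$ an $s$-unit for $\{r,c_0,\dots,c_{n-1}\}$ gives $dp = p$, confirming consistency but giving nothing new. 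The genuinely useful computation is $xp - pd$ where $d$ is an $s$-unit for all coefficients of $p$ and their $\delta$-images: then $pd$ has $x^n$-coefficient $r\cdot d = r$ wait no — $pd = (rx^n+\sum c_ix^i)d$; the term $rx^n\cdot d = \sum_{j=0}^n\binom{n}{j}r\delta^{n-j}(d)x^j$ has top coefficient $rd = r$; so $xp - (\text{shift of }pd)$... the degrees don't match. Let me reconsider: $xp\in L$ has degree $n+1$ with top coefficient $r$; I want a degree-$n$ element of $L$ with top coefficient $\delta(r)$. I would therefore use the right-ideal structure: $xp\in L$, and then $(xp)d\in L$ for $d$ an $s$-unit of the relevant finite set; now $(xp)d = x(pd)$, and if I've arranged (via $\sigma=\id$ and choosing $d$ to $s$-unitize coefficients and $\delta$-images) that the degree-$n$ part of $xp$ has coefficient $\delta(r) + c_{n-1}$ and the degree $n+1$ part is $rx^{n+1}$, then I separately note $p x^0$... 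The honest summary: the proof of \eqref{prop:Linv} hinges on producing, from $p\in L$ and an $s$-unit $d$ for a suitable finite set (including $r$, the $c_i$, and enough $\delta$-images so that Lemma~\ref{lemma:deltad} applies to kill error terms like $r\delta(d)$), the element $x p - p d \in L$ (both summands lie in $L$ since $L$ is an ideal), compute its expansion using $\sigma=\id$: the top-degree term cancels precisely because $xp$ contributes $rx^{n+1}$ and $pd$ contributes, after the shift is unnecessary — rather $xp$ has top term $rx^{n+1}$ and $pd$ has top term $rd\,x^n$, so these are different degrees and I instead form $xp - (r x^n + \cdots)$ directly; ultimately the degree-$n$ coefficient of the resulting element of $L$ works out to $\delta(r)$ (all other contributions at degree $n$ being expressible, via $s$-unitality, back in terms of elements already known to lie in $H_n(L)$, hence subtractable), which exhibits $\delta(r)\in H_n(L)$. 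I expect the bookkeeping of exactly which finite set $d$ must $s$-unitize — and the invocation of Lemma~\ref{lemma:deltad} to discard the $\delta(d)$-error terms — to be the crux, and $\sigma=\id_R$ to be essential so that right multiplication by $d$ does not disturb the leading coefficient beyond controllable lower-order terms.
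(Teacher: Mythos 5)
Your arguments for parts (a) and (b) are correct (the paper simply declares these ``clear''), and your identification of $\pi_n^n=\sigma^n$ and the use of surjectivity in (b) is exactly right. Part (c), however, has a genuine gap: you never actually produce the element of $L$ that witnesses $\delta(r)\in H_n(L)$, and the candidates you circle around do not work.

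Two concrete problems. First, you repeatedly treat $x$ as an element of $S=R[x;\delta]$, writing $xp\in L$. In the non-unital setting $x\notin S$ (the paper stresses that the symbols $x^i$ are mere placeholders), and $L$ is only assumed to be an ideal of $S$, so $xp\in L$ is unjustified. Second, even granting it, your element $xp-pd$ pairs a degree-$(n+1)$ polynomial with a degree-$n$ one, so, as you yourself observe, the leading terms cannot cancel and you are left with leading coefficient $r$ in degree $n+1$ rather than $\delta(r)$ in degree $n$. Your closing assertion that ``ultimately the degree-$n$ coefficient works out to $\delta(r)$'' is not backed by any computation that survives these objections. The missing idea is to replace $x$ by $dx$, which \emph{is} an element of $S$ for $d\in R$: take $d$ an $s$-unit for $\{r,\delta(r),c_{n-1}\}$ and form
\begin{displaymath}
dx\,p - p\,dx \in L .
\end{displaymath}
Both products now have degree $n+1$ with leading coefficients $dr=r$ and $rd=r$ respectively, so the top terms cancel; at degree $n$ the left product contributes $d\delta(r)+dc_{n-1}=\delta(r)+c_{n-1}$, while the right product contributes $n\cdot r\delta(d)+c_{n-1}d = 0 + c_{n-1}$, the first term vanishing by Lemma~\ref{lemma:deltad}. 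The difference is therefore $\delta(r)x^n$ plus lower-degree terms, which is precisely the required witness. Your instinct that $s$-unitality, $\sigma=\id_R$, and Lemma~\ref{lemma:deltad} are the operative ingredients is correct, but without the $dx$-commutator the argument does not close.
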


\begin{proof}
\eqref{prop:Ll}: This is clear.

\eqref{prop:Lr}: This is clear.

\eqref{prop:Linv}:
By \eqref{prop:Ll} and \eqref{prop:Lr}, clearly $H_n(L)$ is a two-sided ideal of $R$.
Take $a\in H_n(L)$ and a corresponding element
$y:=ax^n + \sum_{i=0}^{n-1} c_i x^i \in L$.
Let $d \in R$ be an $s$-unit for the set $\{a,\delta(a),c_{n-1}\}$.
Notice that $dx y - y dx \in L$.
Using Lemma~\ref{lemma:deltad} (at one step), we compute
\begin{align*}
	dx y - y dx =
	dx \left( ax^n + \sum_{i=0}^{n-1} c_i x^i \right)
	- \left( ax^n  + \sum_{i=0}^{n-1} c_i x^i \right) dx \\
	= da x^{n+1} + d \delta(a) x^n +  d c_{n-1} x^n - \left( a d x^{n+1} + n\cdot a \delta(d) x^n + c_{n-1} d x^{n} \right) + [\text{lower degree terms}]\\
		= a x^{n+1} + \delta(a) x^n +  c_{n-1} x^n - \left( a x^{n+1} + 0 + c_{n-1} x^{n} \right) + [\text{lower degree terms}]\\
	= \delta(a) x^n + [\text{lower degree terms}].
\end{align*}
This shows that $\delta(a) \in H_n(L)$. Thus, $H_n(L)$ is $\delta$-invariant.
\end{proof}

We are now going to present the main result of this article which
generalizes \cite[Theorem 4.15]{ORS2013}.

\begin{thm}\label{thm:simplicitynonunitality}
Let $R$ be an $s$-unital ring and let $\delta : R \rightarrow R$ be a derivation
such that $\ker(\delta)$ contains a nonzero idempotent.
Consider the non-unital differential polynomial ring $S := R[x; \delta]$.
The following five assertions are equivalent:
\begin{enumerate}[{\rm (i)}]
	\item\label{ThmLocal:Simple} $S$ is a simple ring;
	
		\item\label{ThmLocal:EachCorner2} $R$ is $\delta$-simple and $eSe$ is simple for every nonzero idempotent $e\in R$;
	
	\item\label{ThmLocal:EachCorner} $R$ is $\delta$-simple and $Z(eSe)$ is a field for every nonzero idempotent $e\in R$;
	
	\item\label{ThmLocal:SomeCorner2} $R$ is $\delta$-simple and $eSe$ is simple for some nonzero idempotent $e\in \ker(\delta)$;
	
	\item\label{ThmLocal:SomeCorner} $R$ is $\delta$-simple and $Z(eSe)$ is a field for some nonzero idempotent $e\in \ker(\delta)$.
	
\end{enumerate}
\end{thm}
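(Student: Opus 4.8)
The plan is to prove the theorem by establishing a cycle of implications among the five statements, exploiting the earlier propositions as much as possible. The implications (i) $\Rightarrow$ (ii) and (ii) $\Rightarrow$ (iii) are nearly immediate: for the first, Proposition~\ref{sigmadeltasimple} gives $\delta$-simplicity of $R$, and Proposition~\ref{prop:simpleCorner} shows every corner $eSe$ of the simple ring $S$ is simple; for the second, simplicity of a unital ring forces its center to be a field. Likewise (ii) $\Rightarrow$ (iv) and (iii) $\Rightarrow$ (v) are trivial instances of ``for every'' implying ``for some'', once we note that $\ker(\delta)$ contains a nonzero idempotent by hypothesis, so the quantifier ``for some nonzero idempotent $e \in \ker(\delta)$'' is not vacuous. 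The genuine content is therefore concentrated in closing the cycle, i.e. in showing that (v) (the weakest-looking statement) implies (i).

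For the step (v) $\Rightarrow$ (i), I would fix a nonzero idempotent $e \in \ker(\delta)$ with $Z(eSe)$ a field and assume $R$ is $\delta$-simple, and then show $S$ has no proper nonzero ideals. Let $L$ be a nonzero ideal of $S$. The first move is to show $L$ contains a nonzero element of $R$, or at least to get a handle on its ``leading coefficient set'': apply Lemma~\ref{lemma:Hn}\eqref{prop:Linv}, which tells us that for each $n$, $H_n(L)$ is a $\delta$-invariant ideal of $R$, hence by $\delta$-simplicity is either $\{0\}$ or all of $R$. Picking $n$ minimal with $H_n(L) = R$ (and separately handling the case where $L \cap R \neq \{0\}$, forcing $H_0$-type reasoning and $L \supseteq R$ since $R$ is $\delta$-simple as an ideal of itself, hence $L = S$ because $R$ is $s$-unital and generates $S$), one obtains an element of $L$ of the shape $d x^n + (\text{lower degree})$ where $d$ is an $s$-unit for whatever finite set is relevant. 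The aim is to multiply such an element on both sides by the idempotent $e$ and use $\delta(e) = 0$, together with Proposition~\ref{prop:cornerIso} and Lemma~\ref{CornerGenerated}, to transfer the problem into the unital corner ring $eSe \cong eRe[x;d]$.

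Inside the unital Ore extension $eRe[x;d]$, I can now invoke the unital theory: by Proposition~\ref{prop:simpleCorner}'s converse direction combined with Theorem~\ref{thm:unitalsimplicy}, the corner $eSe$ is simple precisely when $eRe$ is $d$-simple and $Z(eSe)$ is a field; since we are assuming the latter, it remains only to see that $eRe$ is $d$-simple, which should follow from $\delta$-simplicity of $R$ by a standard argument: a $d$-invariant ideal $I$ of $eRe$ generates a $\delta$-invariant ideal $RIR$ of $R$ (using Proposition~\ref{prop:idempotents}(c),(d) to check $\delta$-invariance), which must be $\{0\}$ or $R$, and intersecting back with $eRe$ recovers $I$. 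Thus $eSe$ is simple. Finally, knowing $e \in L$ would be enough: from $e \in L \cap eSe$ and simplicity of $eSe$ one gets $e \in$ (ideal generated by $L \cap eSe$ in $eSe$)$= eSe$, hence $e \in L$, and then the two-sided ideal generated by $e$ in $S$ contains $SeS$; combined with $s$-unitality of $R$ (so that $ReR$-type arguments and Proposition~\ref{prop:idempotents}(c) show $ReR$ is a nonzero $\delta$-invariant ideal, hence $ReR = R$), this forces $L \supseteq R$ and therefore $L = S$.

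The main obstacle I anticipate is the bookkeeping in the reduction step: starting from an arbitrary nonzero ideal $L$ of $S$ and producing an element of $L$ that, after compression by $e$, is nonzero in $eSe$ and generates a nonzero ideal there. One must be careful that multiplying by $e$ does not annihilate the leading coefficient — this is where the freedom to replace $d$ by an $s$-unit (via Lemma~\ref{lemma:deltad} and the $s$-unitality of $R$) and the $\delta$-simplicity of $R$ (to arrange that the relevant leading-coefficient ideal is all of $R$, so in particular contains something not killed by $e$) must be combined. I would also need a small lemma, probably proved inline, that a nonzero ideal of $S$ whose intersection with $R$ is nonzero must equal $S$, using that $R$ is $\delta$-simple together with the $s$-unital structure of $R$ so that $R$ generates $S$ as an ideal; this handles the degenerate low-degree cases and lets the induction on degree proceed cleanly.
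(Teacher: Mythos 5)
Your proposal is correct, but for the key implication (v)$\Rightarrow$(i) it takes a genuinely different route from the paper --- in fact, the very route the authors point out as a possible shortcut in the remark following the theorem. The paper's proof of (v)$\Rightarrow$(i) is self-contained: it takes the minimal degree $m$ with $H_m(I)\neq\{0\}$, produces $y=ex^m+\cdots\in I$ with coefficients in $eRe$, and shows by the minimality of $m$ that $y$ commutes with $eRe$ and with $exe$, hence lies in $Z(eSe)$; since $Z(eSe)$ is a field this contradicts $m>0$, so $m=0$ and $e\in I$. Your version instead passes to the unital corner $eSe\cong eRe[x;d]$ (Proposition~\ref{prop:cornerIso}) and invokes Theorem~\ref{thm:unitalsimplicy} there, which requires the extra (correct, but not in the paper) observation that $\delta$-simplicity of $R$ forces $d$-simplicity of $eRe$: for a nonzero $d$-invariant ideal $I$ of $eRe$, the set $RIR$ is a nonzero $\delta$-invariant ideal of $R$ (the verification uses $\delta(I)\subseteq I$ and $I\subseteq eRe$), hence $RIR=R$ and $eRe=e(RIR)e\subseteq I$. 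Once $eSe$ is known to be simple, your endgame --- show $eLe$ is a nonzero ideal of $eSe$ for any nonzero ideal $L$ of $S$, conclude $e\in eLe\subseteq L$, then $R=ReR\subseteq L$ and $L=S$ by $s$-unitality --- is sound, and the ``bookkeeping'' you worry about is lighter than you anticipate: since $H_n(L)=R$ for any $n$ with $H_n(L)\neq\{0\}$, you can take $y=ex^n+\cdots\in L$ whose compression $eye$ still has leading coefficient $e\neq 0$, so no minimality of $n$ and no $s$-unit juggling is needed at that point (the $s$-units are only needed inside Lemma~\ref{lemma:Hn} and in the degree-zero case, which you correctly isolate). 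The trade-off: your argument is shorter but treats the unital Theorem~\ref{thm:unitalsimplicy} as a black box, whereas the paper's argument reproves its essential content inside the corner and is independent of the unital result.
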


\begin{proof}
\eqref{ThmLocal:Simple}$\Rightarrow$\eqref{ThmLocal:EachCorner2}:
Suppose that \eqref{ThmLocal:Simple} holds.
By Proposition~\ref{sigmadeltasimple} we get that
$R$ is $\delta$-simple.
Take a nonzero idempotent $e\in R$.
It follows from Proposition~\ref{prop:simpleCorner} that $eSe$ is simple.

\eqref{ThmLocal:EachCorner2}$\Rightarrow$\eqref{ThmLocal:EachCorner}:
This follows from the fact that the center of a simple unital ring is a field.

\eqref{ThmLocal:EachCorner}$\Rightarrow$\eqref{ThmLocal:SomeCorner}:
Trivial.

\eqref{ThmLocal:SomeCorner}$\Rightarrow$\eqref{ThmLocal:Simple}:
Suppose that \eqref{ThmLocal:SomeCorner} holds.
By Proposition~\ref{prop:idempotents}, $ReR$ is a nonzero $\delta$-invariant ideal of $R$ and hence $R=ReR$.
Let $I$ be a nonzero ideal of $S$.
We claim that $e\in I$.
If we for a moment assume that the claim holds, then $R=ReR \subseteq I$ from which it follows that $S=I$, using the $s$-unitality of $R$.

Now we show the claim.
Choose $m$ to be the smallest non-negative integer
such that $H_m(I)$ is nonzero.
By Lemma~\ref{lemma:Hn}, $H_m(I)$ is a $\delta$-invariant ideal of $R$.
Hence, by $\delta$-simplicity of $R$, we conclude that $H_m(I)=R$. In particular, $e\in H_m(I)$.
In other words, there is some $y = 
\sum_{i=0}^m y_i x^i \in I$ of degree $m$ such that
$y_m=e$.
If $m=0$, then $e=y \in I$ and we are done.
Seeking a contradiction, suppose that $m>0$.
By multiplying $y$ with $e$ from both the left and the right, and using that $\delta(e)=0$, we may assume that $y_i \in eRe$ for every $i \in \{0,\ldots,m\}$.
Take $r \in R$.
Using that $y_m=e$ we get that $\deg((ere)y-y(ere))<\deg(y)$.
Clearly, $(ere)y-y(ere) \in I$ and hence, by the minimality of $m$, we conclude that
$(ere)y=y(ere)$.
Moreover, $(exe)y-y(exe)\in I$ and using that $\delta(e)=0$ and $exe=ex$,
we compute
\begin{align*}
(exe)y-y(exe)
&=
(ex) \Big( e x^m + y_{m-1} x^{m-1} + [\text{lower degree terms}] \Big)  \\
&
-\Big( e x^m + y_{m-1} x^{m-1} + [\text{lower degree terms}] \Big) (ex) \\
&= ex^{m+1} + e\delta(e) x^{m} + ey_{m-1} x^{m} + [\text{lower degree terms}] \\
& - ex^{m+1} - m \cdot e\delta(e) x^m - y_{m-1}e x^m - [\text{lower degree terms}].
\end{align*}
From the above calculation, and the fact
that $y_i=ey_ie$ which leads to
$(ey_{m-1} - y_{m-1}e)x^m=0$,
we conclude that $\deg((exe)y-y(exe))<\deg(y)$.
Again, by the minimality of $m$, $(exe)y=y(exe)$.
By Lemma~\ref{CornerGenerated}, $eSe$ is generated by $eRe$ and $\{e x^i e\}_{i\in \N}$.
Using that $\delta(e)=0$, we notice that $e x^i e = (exe)^i$, for any positive integer $i$.
Thus, $y$ is contained in the field $Z(eSe)$
but this contradicts the assumption that $m>0$.

\eqref{ThmLocal:SomeCorner2}$\Rightarrow$\eqref{ThmLocal:SomeCorner}:
This is trivial.

\eqref{ThmLocal:EachCorner2}$\Rightarrow$\eqref{ThmLocal:SomeCorner2}:
This is trivial.
\end{proof}

\begin{rem}
Note that we can easily recover Theorem~\ref{thm:unitalsimplicy} by choosing  $e=1_R$ in Theorem~\ref{thm:simplicitynonunitality}.
If one instead relies on Theorem~\ref{thm:unitalsimplicy}, then it is possible to shorten the above proof of (v)$\Rightarrow$(i) by regarding $eSe$ as a unital ring.
\end{rem}

By combining Proposition~\ref{prop:idempotents} with Theorem~\ref{thm:simplicitynonunitality} we get the following.

\begin{cor}\label{commutativesimplicitynonunitality}
Let 
$R$
be a commutative $s$-unital ring 
containing a nonzero idempotent, 
and let $\delta : R \rightarrow R$ be a derivation.
Consider the non-unital differential polynomial ring $S := R[x; \delta]$.
The following three assertions are equivalent:
\begin{enumerate}[{\rm (i)}]
	\item $S$ is a simple ring;
	\item $R$ is $\delta$-simple and $Z(eSe)$ is a field for every nonzero idempotent $e\in R$;
	\item $R$ is $\delta$-simple and $Z(eSe)$ is a field for some nonzero idempotent $e\in R$.
\end{enumerate}
\end{cor}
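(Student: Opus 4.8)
The plan is to deduce Corollary~\ref{commutativesimplicitynonunitality} directly from Theorem~\ref{thm:simplicitynonunitality} by showing that, in the commutative setting, the hypothesis ``$\ker(\delta)$ contains a nonzero idempotent'' is automatically satisfied, and that conditions \eqref{ThmLocal:EachCorner} and \eqref{ThmLocal:SomeCorner} of the theorem coincide with conditions (ii) and (iii) of the corollary. The first of these is the crux, and it is precisely where Proposition~\ref{prop:idempotents}(b) enters: if $R$ is commutative, then every idempotent $e\in R$ lies in $Z(R)=R$, so part (b) of that proposition gives $\delta(e)=0$, i.e. $e\in\ker(\delta)$. Since $R$ is assumed to contain a nonzero idempotent, $\ker(\delta)$ does too, and the hypotheses of Theorem~\ref{thm:simplicitynonunitality} are met.

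Having established that, I would carry out the argument in the following order. First, observe that $R$ commutative with a nonzero idempotent, together with Proposition~\ref{prop:idempotents}(b), puts us in a position to invoke Theorem~\ref{thm:simplicitynonunitality}. Second, note that ``nonzero idempotent $e\in R$'' and ``nonzero idempotent $e\in\ker(\delta)$'' describe the same set of elements in this setting, so the ``for some'' clauses in parts (v) and (iii) of the theorem match part (iii) of the corollary verbatim, and likewise the ``for every'' clause of part (iii) of the theorem matches part (ii) of the corollary. Third, conclude: (i)$\Leftrightarrow$(iii)$\Leftrightarrow$(v) of the theorem specializes to (i)$\Leftrightarrow$(ii)$\Leftrightarrow$(iii) of the corollary, which is exactly the claimed equivalence.

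I do not expect any genuine obstacle here; the content of the corollary is entirely a matter of checking that the commutativity hypothesis absorbs the idempotent-in-the-kernel hypothesis of the theorem, and this is immediate from Proposition~\ref{prop:idempotents}(b). The one point worth stating carefully is that we must know \emph{at least one} nonzero idempotent exists (which is assumed) before the ``for some'' formulations are non-vacuous; the quantifier-matching with the theorem then requires no further work. A short proof along these lines will suffice:

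\begin{proof}
Since $R$ is commutative, every idempotent of $R$ is central, so by Proposition~\ref{prop:idempotents}(b) every idempotent $e\in R$ satisfies $\delta(e)=0$, i.e. $e\in\ker(\delta)$. By hypothesis $R$ contains a nonzero idempotent, hence so does $\ker(\delta)$, and Theorem~\ref{thm:simplicitynonunitality} applies to $S=R[x;\delta]$. Moreover, in the present situation the phrases ``nonzero idempotent $e\in R$'' and ``nonzero idempotent $e\in\ker(\delta)$'' refer to the same elements. Therefore condition (iii) of Theorem~\ref{thm:simplicitynonunitality} coincides with condition (ii) of the present corollary, and condition (v) of Theorem~\ref{thm:simplicitynonunitality} coincides with condition (iii) of the present corollary. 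The equivalence (i)$\Leftrightarrow$(iii)$\Leftrightarrow$(v) of Theorem~\ref{thm:simplicitynonunitality} thus yields the equivalence (i)$\Leftrightarrow$(ii)$\Leftrightarrow$(iii) asserted here.
\end{proof}
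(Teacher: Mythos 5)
Your proof is correct and is exactly the argument the paper intends: the paper gives no written proof beyond the remark that the corollary follows "by combining Proposition~\ref{prop:idempotents} with Theorem~\ref{thm:simplicitynonunitality}," and your use of Proposition~\ref{prop:idempotents}(b) to place every idempotent of the commutative ring $R$ in $\ker(\delta)$, followed by matching the quantified conditions, is precisely that combination.
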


\begin{rem}\label{rem:innerDerivations}
(a)
Recall that a derivation $\delta : R \to R$ is said to be \emph{inner}
if there is some $a\in R$ such that $\delta(r)=ar-ra$, for all $r\in R$.

(b)
If $\delta$ is an inner derivation on a ring $R$, then every ideal of $R$ is $\delta$-invariant.
Consequently, $R$ is $\delta$-simple if and only if $R$ is simple.

(c)
If $R$ is a unital ring and $\delta$ is an inner derivation, then $R[x;\delta]$ is not simple by a result of Goodearl  \cite[Lemma 1.5]{Goodearl92}. This can be proven by noting that if $\delta(r) =ar-ra$ for all $r\in R$, then the proper left ideal generated by $x-a$ is also a right ideal. 
\end{rem}

\begin{cor}\label{cor:NonSimple1}
Let $R$ be a ring with an inner derivation $\delta$, and suppose there exists a nonzero idempotent $e \in R$ such that $\delta(e)=0$. Then $S:=R[x;\delta]$ is not simple.
\end{cor}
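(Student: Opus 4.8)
The plan is to leverage the corner subring machinery already developed, combined with Proposition~\ref{prop:cornerIso} and Goodearl's result on inner derivations of unital differential polynomial rings. Since $\delta$ is inner, say $\delta(r) = ar - ra$ for all $r \in R$, Remark~\ref{rem:innerDerivations}(c) tells us that $R[x;\delta]$ fails to be simple whenever $R$ is unital. The obstacle here is that $R$ is merely assumed to contain a nonzero idempotent $e$ with $\delta(e) = 0$; it need not be unital, so we cannot invoke Goodearl's lemma directly. The idea is therefore to pass to the corner subring $eSe$, which \emph{is} unital, and transport the non-simplicity there back to a statement about $S$.

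First I would record that by Proposition~\ref{prop:idempotents}(e), since $\delta(e) = 0$, the restriction $d$ of $\delta$ to $eRe$ is again an inner derivation of the unital ring $eRe$, induced by the element $eae$. Next I would apply Proposition~\ref{prop:cornerIso}: the corner subring $eSe$ is isomorphic to the unital Ore extension $eRe[x;d]$. Now Remark~\ref{rem:innerDerivations}(c) (i.e.\ Goodearl's \cite[Lemma 1.5]{Goodearl92}) applies to $eRe[x;d]$, since $eRe$ is unital and $d$ is inner: hence $eRe[x;d]$, and therefore $eSe$, is not simple. One should note that $eSe$ is nonzero, since $e = eee \in eSe$, so this is a genuine obstruction.

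Finally I would conclude via the contrapositive of Proposition~\ref{prop:simpleCorner}: if $S$ were simple, then $eSe$ would be simple for every nonzero idempotent $e \in S$, in particular for our chosen $e$. Since we have just shown $eSe$ is not simple, $S$ cannot be simple either. I expect the main (and only real) obstacle to be checking that the hypotheses of Proposition~\ref{prop:cornerIso} and of Remark~\ref{rem:innerDerivations}(c) are all met under the weaker assumption here; once the idempotent $e$ with $\delta(e) = 0$ is in hand, everything else is a matter of chaining together the already-established propositions. No delicate computation is needed beyond what Proposition~\ref{prop:idempotents} already provides.

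\begin{proof}
Write $\delta(r) = ar - ra$ for all $r \in R$, where $a \in R$. By hypothesis there is a nonzero idempotent $e \in R$ with $\delta(e) = 0$. By Proposition~\ref{prop:idempotents}(e), the restriction $d$ of $\delta$ to the unital corner subring $eRe$ is an inner derivation of $eRe$ (induced by $eae$). By Proposition~\ref{prop:cornerIso}, the corner subring $eSe$ is isomorphic to the unital Ore extension $eRe[x;d]$. Since $eRe$ is unital and $d$ is inner, Remark~\ref{rem:innerDerivations}(c) shows that $eRe[x;d]$ is not simple; hence $eSe$ is not simple. As $e = eee \in eSe$, the corner $eSe$ is nonzero. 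If $S$ were simple, then by Proposition~\ref{prop:simpleCorner} the corner subring $eSe$ would be simple, a contradiction. Therefore $S$ is not simple.
\end{proof}
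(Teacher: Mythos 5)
Your proposal is correct and follows exactly the paper's own argument: restrict $\delta$ to the corner $eRe$ via Proposition~\ref{prop:idempotents}(e), identify $eSe$ with $eRe[x;d]$ via Proposition~\ref{prop:cornerIso}, invoke Goodearl's result (Remark~\ref{rem:innerDerivations}(c)) on the unital corner, and conclude by the contrapositive of Proposition~\ref{prop:simpleCorner}. No differences worth noting.
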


\begin{proof} 
The restriction, $d$, of $\delta$ to $eRe$ is an inner derivation by Proposition~\ref{prop:idempotents}. By  Proposition~\ref{prop:cornerIso}, $eSe$ is isomorphic to $eRe[x;d]$. Since $d$ is inner, $eSe$ is not simple by Remark \ref{rem:innerDerivations}. Thus, by Proposition~\ref{prop:simpleCorner}, $S$ is not simple. 
\end{proof}

\begin{cor}\label{cor:innerNotSimple}
Let $R$ be a ring with an inner derivation $\delta$.
The following two assertions hold:
\begin{itemize}
    \item[(a)] If $R$ contains a nonzero central idempotent, then $R[x;\delta]$ is not simple.

    \item[(b)] If $R$ is locally unital, then $R[x;\delta]$ is not simple.
\end{itemize}
\end{cor}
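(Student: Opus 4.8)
The plan is to derive both parts as quick consequences of Corollary~\ref{cor:NonSimple1} (or, for part (a), of Corollary~\ref{cor:innerDerivations} together with Theorem~\ref{thm:simplicitynonunitality}), by exhibiting in each case a nonzero idempotent killed by $\delta$.

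For part (a), suppose $e \in R$ is a nonzero central idempotent. Since $e \in Z(R)$, Proposition~\ref{prop:idempotents}(b) gives $\delta(e) = 0$. Hence $e$ is a nonzero idempotent in $\ker(\delta)$, and Corollary~\ref{cor:NonSimple1} applies directly to conclude that $R[x;\delta]$ is not simple.

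For part (b), suppose $R$ is locally unital. Then $R \neq \{0\}$ forces the existence of a nonzero element, and by Remark~\ref{rem:localunits}(b) there is an idempotent $e \in R$ that is a local unit for that element; in particular $e \neq 0$. However, such an $e$ need not lie in $\ker(\delta)$, so one cannot immediately invoke Corollary~\ref{cor:NonSimple1}. The fix is to observe that, since $\delta$ is inner, say $\delta(r) = ar - ra$, we may choose a local unit $e$ for the finite set $\{a\}$ as well (local unitality lets us absorb finitely many prescribed elements into a single corner); then $eae = a$, and for every $r \in R$ one computes $\delta(ere) = aere - ereae = (eae)(ere) - (ere)(eae)$, so the restriction of $\delta$ to $eRe$ is the inner derivation of the unital ring $eRe$ induced by $eae$. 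But I still need $\delta(e) = 0$ to apply Proposition~\ref{prop:cornerIso} and Corollary~\ref{cor:NonSimple1} verbatim. To get this, enlarge the finite set once more: let $e$ be a local unit for $\{a, \delta(a)\}$ — equivalently for the finite set $\{a\} \cup \{\delta(r) : r \text{ in some fixed finite set}\}$; it suffices to take a local unit for $\{a, \delta(a)\}$, so that $ea = a = ae$ and $e\,\delta(a) = \delta(a) = \delta(a)e$. Then by Lemma~\ref{lemma:deltad} (with the local unit playing the role of the $s$-unit $d$) we obtain $a\,\delta(e) = 0$, and symmetrically $\delta(e)a = 0$; moreover $\delta(e) = \delta(eee)$ and, using $\delta(e) = e\delta(e) + \delta(e)e$ (from $e = e^2$) together with $e\delta(e)e = 0$ from Proposition~\ref{prop:idempotents}(a), one reduces $\delta(e)$ to $e\delta(e)(1-e) + (1-e)\delta(e)e$ inside the unitalization. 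The cleanest route, though, is simply: since $\delta$ is inner and $e$ is a local unit for $\{a, \delta(a)\}$ — actually for $\{a\}$ suffices once we note $\delta(e) = \delta(e^2) = e\delta(e) + \delta(e)e$ and $\delta(e) = ae - ea$, so $\delta(e) = ae - ea = (eae)e - e(eae)\cdot(\text{after multiplying by }e)$ — hmm. Let me instead use that $e$ is a local unit for $\{a\}$: then $\delta(e) = ae - ea = a - a = 0$ directly, since $ae = (eae)\cdot$ no: $ea = a$ and $ae = a$ because $e$ is a local unit for $a$, hence $\delta(e) = ae - ea = a - a = 0$. So choosing $e$ to be a local unit for the singleton $\{a\}$ already yields $\delta(e) = 0$. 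Now $e$ is a nonzero idempotent with $\delta(e) = 0$, and Corollary~\ref{cor:NonSimple1} gives that $R[x;\delta]$ is not simple.

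The only genuine subtlety — and the step I would flag as the main obstacle — is the observation in part (b) that a local unit $e$ for $\{a\}$ automatically satisfies $\delta(e) = 0$ when $\delta(\cdot) = a(\cdot) - (\cdot)a$, because $\delta(e) = ae - ea = a - a = 0$; everything else is a direct citation of earlier results. It is worth a one-line remark that $R$ being locally unital and nonzero guarantees such an $e$ exists and is nonzero.

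\begin{proof}
(a) Let $e \in R$ be a nonzero central idempotent. By Proposition~\ref{prop:idempotents}(b), $\delta(e) = 0$. Thus $e$ is a nonzero idempotent in $\ker(\delta)$, and Corollary~\ref{cor:NonSimple1} shows that $R[x;\delta]$ is not simple.

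(b) Suppose $R$ is locally unital. Write $\delta(r) = ar - ra$ for some $a \in R$ and all $r \in R$. Since $R$ is locally unital, there is an idempotent $e \in R$ which is a local unit for the set $\{a\}$; in particular $e$ is nonzero and $ea = ae = a$. Then $\delta(e) = ae - ea = a - a = 0$. Hence $e$ is a nonzero idempotent with $\delta(e) = 0$, and Corollary~\ref{cor:NonSimple1} shows that $R[x;\delta]$ is not simple.
\end{proof}
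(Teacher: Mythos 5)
Your proof is correct and follows the paper's argument exactly: part (a) combines Proposition~\ref{prop:idempotents}(b) with Corollary~\ref{cor:NonSimple1}, and part (b) picks a local unit $e$ for $\{a\}$ so that $\delta(e)=ae-ea=a-a=0$ and then cites Corollary~\ref{cor:NonSimple1}. The only point worth tightening is the degenerate case $a=0$ (which the paper dispatches with ``without loss of generality $a\neq 0$''): a local unit for $\{0\}$ need not be nonzero, but then $\delta\equiv 0$ and one simply takes a local unit for any nonzero element of $R$ instead.
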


\begin{proof}
In both cases we will invoke Corollary~\ref{cor:NonSimple1}
to reach the desired conclusion.

(a) If $e \in Z(R)$, then $\delta(e)=0$ by Proposition~\ref{prop:idempotents}.

(b) Suppose that there is some $a\in R$ such that $\delta(r)=ar-ra$ for every $r\in R$.
Without loss of generality, we may assume that $a\neq 0$.
By local unitality, we can find a nonzero idempotent $e$ such that $e a = a = a e$.
Clearly, $\delta(e)=ae-ea=0$.
\end{proof}

\section{Examples}\label{sec:ex}

In this section we will illustrate our results by applying them to several classes of examples.

\begin{exmp}[Outer derivation on a non-commutative ring]\label{ex:nonunital}

(a) Let $T$ be a locally unital (nonzero) simple ring such that $nt \neq 0$ for all $n\in \Z \setminus \{0\}$ and $t\in T \setminus \{0\}$, 
and consider the polynomial ring $R:=T[y]$.
Notice that $R$ has a set of local units $E$ which are contained in $T$.
We may define a derivation $\delta : R \to R$ as the $T$-linear extension of the rule
$\delta(t y^i)=i t y^{i-1}$ and $\delta(T)=\{0\}$.
In particular, we notice that $E \subseteq \ker(\delta)$.
Consider the non-unital differential polynomial ring $S := R[x; \delta]$. Our goal is to show that $S$ is a simple ring, by using Theorem~\ref{thm:simplicitynonunitality}.

We begin by showing that $R$ is $\delta$-simple.
Suppose that $I$ is a nonzero $\delta$-invariant ideal of $R$.
Let $a \in I$ be a nonzero element of smallest possible degree.
Clearly the degree of $\delta(a) \in I$ is smaller than that of $a$.
Hence, by our assumption, the degree of $a$ must be $0$, i.e. $a\in T$.
Hence $I \cap T$ is a nonzero ideal of $T$ and using that $T$ is simple, we conclude that $I \cap T = T$. Thus, $I=R$.

Now, pick a nonzero local unit $e \in T \subseteq R$.
We will show that $Z(eSe)$ is a field.
Using that $\delta(e)=0$, we get that
any element $e \left(a_0 + \sum_{i=1}^n a_i x^i\right) e \in eSe$ may be written on the form
$e a_0 e + \sum_{i=1}^n e a_i e x^i$ where $e a_i e \in e R e$.
Take an arbitrary $ec_0e + \sum_{i=1}^m e c_i e x^i \in Z(eSe)$.
Let $b =ete \in eTe \subseteq eSe$ be arbitrary.
Then
\begin{align*}
0 &= \left( ec_0e + \sum_{i=1}^m e c_i e x^i \right) b - b \left( ec_0e + \sum_{i=1}^m e c_i e x^i \right) \\
&= \left( ec_0e + \sum_{i=1}^m e c_i e x^i \right) ete - ete \left( ec_0e + \sum_{i=1}^m e c_i e x^i \right) \\
&= \left( ec_0ete + \sum_{i=1}^m e c_i e te x^i  \right) - \left( etec_0e + \sum_{i=1}^m ete c_i e x^i \right) \\
&= (ec_0ete-etec_0e)   + \sum_{i=1}^m (e c_i e \ e te - ete \ e c_i e) x^i.
\end{align*}
This shows that $e c_i e \in C_{eRe}(eTe)$ for each $i \in \{1,\ldots,m\}$.

Consider the element $e x e \in eSe$ and notice that
\begin{align*}
0 &= \left( ec_0e + \sum_{i=1}^m e c_i e x^i \right) exe - exe \left( ec_0e + \sum_{i=1}^m e c_i e x^i \right) \\
&= \left( ec_0 ex + \sum_{i=1}^m e c_i e x^{i+1} \right) - \left( exec_0e + \sum_{i=1}^m (ex e c_i e) x^i \right) \\
&= \left( ec_0 ex + \sum_{i=1}^m e c_i e x^{i+1} \right) - \left( ec_0ex + \delta(ec_0e) + \sum_{i=1}^m (e c_i e x + \delta(e c_i e) ) x^i \right) \\
&= - \left(\delta(ec_0e) + \sum_{i=1}^m \delta(e c_i e) x^i \right).
\end{align*}
Using that $\delta(e c_i e) =0$, we conclude that $e c_i e \in T$, for each $i\in \{1,\ldots,m\}$.
In conclusion, for each $i\in \{1,\ldots,m\}$, $e c_i e \in Z(eTe)$.
Using that $T$ is simple, it is easy to see that $eTe$ is simple. In particular, $Z(eTe)$ is a field.

Finally, we notice that
\begin{align*}
0 &= \left( ec_0e + \sum_{i=1}^m e c_i e x^i \right) eye - eye \left( ec_0e + \sum_{i=1}^m e c_i e x^i \right) \\
&= \left( ec_0eye + \sum_{i=1}^m e c_i e \left( \delta^i(eye) + \sum_{j=1}^i \binom{i}{j} \delta^{i-j}(eye) x^j \right)\right)
- \left( eyec_0e + \sum_{i=1}^m eye c_i e x^i \right) \\
&= \left( ec_1 e ( eyex + \delta(eye)) + \sum_{i=2}^m e c_i e \left( 1 \cdot eye x^i + i \cdot \delta(eye) x^{i-1} \right) \right)
- \left( \sum_{i=1}^m eye c_i e x^i \right) \\
&= ec_1e + \sum_{i=2}^m e c_i e \cdot i \cdot \delta(eye) x^{i-1}
= ec_1e + \sum_{i=2}^m i \cdot e c_i e x^i.
\end{align*}
This shows that $ec_ie = 0$ for each $i \in \{1,\ldots,m\}$.
In conclusion, we have shown that $Z(eSe)=Z(eTe)$ which is a field.

(b) More concretely, in (a) we may e.g. take $T:=M_\infty(\R)$, the ring of infinite matrices with a finite number of nonzero entries. It is easy to see that $T$ is a non-unital simple ring
and that it is locally unital.
\end{exmp}

\begin{exmp}[Inner derivation on a non-commutative ring]\label{ex:innernoncomm}
Consider $R:=M_\infty(\R)$. Take any nonzero $m \in R$ and define an inner derivation
$\delta(r) = mr-rm$.
Clearly, a set of local units for $R$ can be formed by taking all diagonal matrices with finitely many ones on the diagonal and the rest of the entries being zero. By Corollary \ref{cor:innerNotSimple}, $S:=R[x;\delta]$ is not simple.
\end{exmp}

\begin{exmp}[Outer derivation on a commutative ring]\label{ex:outercomm}
Let $T$ be the algebra of functions on $\R$ that are compactly supported and smooth except at a finite number of points, i.e. any function $ f \in T$ has compact support and there exist points $x_1, x_2, \ldots x_n$ such that $f$ is $C^{\infty}$ on $\R \setminus \{x_1, x_2. \ldots, x_n\}$. The set of functions in $T$ that are zero almost everywhere (equivalently, zero except in a finite number of points) form an ideal, $I$. Define $R := T/I.$ The elements in $R$ are equivalence classes of functions from $T$, consisting of functions that are equal except in a finite number of points. 

We can use the usual derivative from calculus to define a derivation $\delta$ on $R$ in the obvious way. Indeed, if $f$ and $g$ are functions in $T$ belonging to the same equivalence class, then they are equal and smooth in a neighbourhood of almost every point. Hence, their derivatives $f'$ and $g'$ (definining them arbitrarily where $f$ and $g$ are not differentiable) belong to the same equivalence class in $T$. It is clear that the product and sum rules hold.

Clearly, $R$ has a set $E$ of local units consisting of characteristic functions.
Obviously, $E \subseteq \ker(\delta)$, but one may also use Proposition~\ref{prop:idempotents} and commutativity.
Let $J$ be the ideal of $R$ consisting of the equivalence classes of all functions
vanishing outside of the interval $[0,1]$. Clearly, $J$ is a proper $\delta$-invariant ideal of $R$.
Thus, by Corollary~\ref{commutativesimplicitynonunitality}, $S:=R[x;\delta]$ is not simple.
\end{exmp}

\end{document}